% ------------------------------------------------------------------------

\documentclass[12pt]{article}
\usepackage[centertags]{amsmath}
\usepackage{amsthm}
\usepackage{amsfonts}
\usepackage{amssymb}
\usepackage{newlfont}
\usepackage{delarray}
\usepackage{subfigure}
\usepackage{graphicx}
\usepackage[numbers,sort&compress]{natbib}

\usepackage{paralist}
% MATH -------------------------------------------------------------------

\newtheorem{lemma}{Lemma}

\newtheorem{algorithm}{Algorithm}
\theoremstyle{definition}

\def\sp{\mathop{\mathrm{sp}}}

\newcommand{\degree}{\ensuremath{^\circ}}

%%% ----------------------------------------------------------------------
\textwidth=16.5cm \textheight=23cm \oddsidemargin=-0.5cm
\topmargin=-1.0cm
%%% ----------------------------------------------------------------------
\title{Transport in time-dependent dynamical systems: Finite-time coherent sets}
\author{Gary Froyland$^1$, Naratip Santitissadeekorn$^1$, and Adam Monahan$^2$ \\
$^1$School of Mathematics and Statistics \\ University of New South
Wales\\ Sydney NSW 2052, Australia\\
$^2$School of Earth and Ocean Sciences\\ University of Victoria\\ Victoria BC, Canada}

\begin{document}
 \maketitle
% \tableofcontents

 \begin{abstract}
We study the transport properties of nonautonomous chaotic dynamical systems over a finite time duration. We are particularly interested in those regions that remain coherent and relatively non-dispersive over finite periods of time, despite the chaotic nature of the system. We develop a novel probabilistic methodology based upon transfer operators that automatically detects maximally coherent sets.  The approach is very simple to implement, requiring only singular vector computations of a matrix of transitions induced by the dynamics.  We illustrate our new methodology on an idealized stratospheric flow and in two and three dimensional analyses of European Centre for Medium Range Weather Forecasting (ECMWF) reanalysis data.
 \end{abstract}

\section{Introduction}
\textbf{Finite-time transport of time-dependent or nonautonomous chaotic dynamical systems has been the subject of intense study over the last decade.  Existing techniques to analyse transport have evolved from classical geometric theory of invariant manifolds, where co-dimension 1 invariant manifolds are impenetrable transport barriers.  In this work we take a very different approach, based on spectral information contained in a finite-time transfer (or Perron-Frobenius) operator.  Our technique automatically identifies regions of state space that are maximally coherent or non-dispersive over a specific time interval, in the presence of an underlying chaotic system.  These regions, called coherent sets, are robust to perturbation and are carried along by the chaotic flow with little transport between the coherent sets and the rest of state space.  Thus, these coherent sets are ordered skeletons of the time-dependent dynamics around which the chaotic dynamics occurs relatively independently over the finite time considered.  We develop the theory behind an optimization problem to determine these coherent sets and describe in detail a numerical implementation.  Numerical results are given for a model system and real-world reanalysed data.}

Transport and mixing properties of dynamical systems have received considerable interest over the last two decades;  see e.g. \cite{aref_02,Meiss92,wiggins92,wiggins05} for discussions of transport phenomena. A variety of dynamical systems techniques have been introduced to explain transport mechanisms, to detect barriers to transport, and to quantify transport rates.
These techniques typically fall into two classes:  geometric methods, which exploit invariant manifolds and related objects as organizing structures, and probabilistic methods, which study the evolution of probability densities. Geometric methods include the study of invariant manifolds, the theory of lobe dynamics \cite{romkedar_etal90,RW90,wiggins92} in two- (and some three-) dimensions, and the notions of finite-time hyperbolic material lines \cite{haller00} and surfaces \cite{haller01}. The latter objects are often studied computationally via finite-time Lyapunov exponent (FTLE) fields \cite{haller00,shadden05}.  All of these geometric objects represent \emph{transport barriers} and in this way influence (mitigate) global transport.
Probabilistic approaches include a study of almost-invariant sets \cite{DJ97,FD03,froyland05,froyland08} and very recently, coherent sets \cite{FLQ08,FLS10}. For autonomous and time-dependent systems respectively, almost-invariant sets and coherent sets represent those regions in phase space which are \emph{minimally dispersed} under the flow.  Such regions provide an ordered skeleton often hidden in complicated flows.
A recent comparison of the geometric and probabilistic approaches is given in \cite{FP09} for the time-independent and time-periodic settings.

The probabilistic methodologies provide important transport information that is often not well resolved by geometric techniques. Minimally dispersive regions need not be identified by geometric approaches.  For example, recent work \cite{FP09} has shown that regions enclosed by FTLE ridges need not represent maximal transport barriers.
Several authors \cite{MM10,haller10} have noted other shortcomings of the FTLE-based approach: potential ambiguity in multiple FTLE ``ridges'', ambiguity over flow duration for FTLE calculations,
and a lack of correspondence between the strength of the ridge and
the dispersal of mass across the ridge.

Probabilistic techniques have also been shown to be valuable analysis tools for geophysical systems. In such systems, physical quantities are often used to determine transport barriers.  For example, lines of constant sea surface height (as proxies for streamlines under the assumption of geostrophy) are
commonly used to determine locations of rotational trapping regions such as anticyclonic eddies and gyres \cite{CBT07}, and maximum
gradients of potential vorticity (PV) are used to determine
``edges'' of vortices in the stratosphere \cite{MP83,Nash_etal96,SPW97}.  In both of these
geophysical settings, the use of physical quantities has been shown to be
non-optimal in determining the location of transport barriers
\cite{FPET07,SFM10}.

%In the autonomous (time-independent) setting, invariant manifolds of co-dimension 1 represent impenetrable barriers for particle transport and thus are major organising structures. In the time-periodic setting, the theory of lobe dynamics \cite{romkedar_etal_1990,romkedar_wiggins_90,wiggins_92} in two- (and some three-) dimensions can sometimes be used to explain slow mixing and transport.
%In aperiodic nonautonomous settings, the notions of finite-time hyperbolic material lines \cite{haller_00} and surfaces \cite{haller_01} have
%been proposed as barriers to mixing.

Probabilistic and transfer operator approaches \cite{DJ97,DJ99,FD03,SB07,froyland05,froyland08,BS08,FP09} have proven to be very
effective for autonomous systems.
Initial progress has been
made in the development of these techniques for time-dependent systems \cite{FLQ08,FLS10} over \emph{infinite} time horizons.
In the present work, we focus on transport
analysis of time-dependent systems over a \emph{finite} period of time,
significantly expanding on concepts introduced in \cite{SFM10}, and developing finite-time analogues of the time-asymptotic coherent set constructions in \cite{FLQ08,FLS10}.
We develop methodologies to identify those regions of phase space that are minimally dispersive, or maximally coherent, under the flow, for a specific finite time interval.
We demonstrate the efficacy of our approach on two examples:  an idealized stratospheric flow and a flow obtained from assimilated data sourced from the European Centre for Medium Range Weather forecasting.
In the first example, we demonstrate that our new methodology easily detects an important dynamical separation of the domain;  this separation is not clearly evident from an examination of the finite-time Lyapunov exponent (FTLE) field \cite{haller00,haller01,shadden05}.
In the second example, we show that our new techniques can isolate the Antarctic polar vortex to high accuracy on a two-dimensional isentropic surface when compared to the commonly used potential vorticity criterion \cite{MP83}. We also illustrate our technique directly in three dimensions, going beyond the capabilities of existing techniques to image the vortex in three dimensions.

An outline of the paper is as follows.
In section \ref{sect:to} we describe our setting and outline our main computational tool, the transfer operator (or Perron-Frobenius operator), and our numerical approximation approach.
In section \ref{sect:tech} we motivate and detail our new computational approach.
Section \ref{sect:comp} describes the necessary computations and sections \ref{sect:eg1} and \ref{sect:eg2} illustrate our new methodology via two case studies.

%\section{Coherent pairs}
%\par
%Consider again a flow $\Phi:M\times\R\times\R\To M$, where the trajectory $x(t+\tau)=\Phi(x,t;\tau)$ gives
%the terminal point of a particle initially located at $x\in M$ at time $t$ after flowing for time $\tau$ units.
%Given times $t$ and $t+\tau$, we are interested in finding a pair of sets $A_t, A_{t+\tau}$ so that
%$\Phi(A_{t+\tau},t+\tau;-\tau)\approx A_t$.

%\par
%
%The dictionary of subsets of
%$M$ in which we search for good candidate pairs will be governed by
%the particular application we have in mind and possible limitations
%of the model.

\section{Flows, coherent sets, and transfer operators}
\label{sect:to}

Let $M\subset \mathbb{R}^d$ be a compact smooth manifold and
consider a time-dependent vector field $f(z,t)$, $z\in M$, $t\in
\mathbb{R}$. Suppose that $f$ is smooth enough for the existence of
a flow map $\Phi(z,t,\tau):M\times \mathbb{R}\times \mathbb{R}\to
M$, which describes the terminal location of an initial point $z$ at
time $t$, flowing for $\tau$ time units.

%Understanding global
%transport for dynamical systems $\Phi$ with general time dependence
%is a challenging problem and has been approached via a variety of
%techniques.

 Given a base time
$t$ and a flow duration $\tau$, our motivation is to discover \emph{coherent pairs}
of subsets $A_t, A_{t+\tau}\subset M$ such that
$\Phi(A_t,t,\tau)\approx A_{t+\tau}$.
More precisely, we will call $A_t, A_{t+\tau}$ a
\emph{$(\rho_0,t,\tau)-$coherent pair} if
\begin{equation}\label{eq:invariant}
\rho_\mu(A_t,A_{t+\tau}):={\mu(A_t\cap \Phi(A_{t+\tau},t+\tau;
-\tau))}/{\mu(A_t)}\ge \rho_0,
\end{equation}
%\item
and $\mu(A_t)=\mu(A_{t+\tau})$, where $\mu$ is a ``reference'' probability measure at time $t$.
The measure $\mu$ describes the mass distribution of the quantity we wish to study the transport of over the interval $[t,t+\tau]$;  $\mu$ need not be invariant under the flow $\Phi$.

We are only interested in coherent pairs that remain coherent under small diffusive perturbations of the flow;  robust coherent pairs.
Clearly, a $(1,t,\tau)$-coherent pair can be produced by choosing an arbitrary $A_t$ and setting $A_{t+\tau}=\Phi(A_t,t;\tau)$.
However, such a pair may not be stable if some diffusion is added to the system. In a chaotic system, the set $A_{t+\tau}=\Phi(A_t,t;\tau)$ defined as above will experience stretching and folding, and for moderate to large $\tau$ will become very thin and geometrically irregular.
A small amount of diffusion will then easily eject many particles from $A_{t+\tau}$, reducing the coherence ratio $\rho_\mu(A_t,A_{t+\tau})$.
The requirement that coherent pairs be robust under diffusive perturbations favors coherent sets that are geometrically regular;  these robust, regular sets are more likely to be more dynamically meaningful than non-robust, irregular sets.

Our basic tool for identifying sets satisfying~\eqref{eq:invariant} is the transfer (or Perron-Frobenius) operator
$\mathcal{P}_{t,\tau}:L^1(M,\ell)\circlearrowleft$ defined by
\begin{equation}\label{eq:pfeqn}
\mathcal{P}_{t,\tau}f(z):=f(\Phi(z,t+\tau;-\tau))\cdot|\det
D\Phi(z,t+\tau; -\tau)|
\end{equation}
where $\ell$ is normalized Lebesgue measure on $M$.
If $f(z)$ is a density of passive tracers at time $t$,
$\mathcal{P}_{t,\tau}f(z)$ is the tracer density at time $t+\tau$
induced by the flow $\Phi$. In the autonomous setting, almost-invariant sets were determined \cite{DJ99,FD03,froyland05} by thresholding
eigenfunctions $f$ of $\mathcal{P}_{t,\tau}$ ($=\mathcal{P}_\tau$
for all $t$) corresponding to positive eigenvalues $\lambda\approx
1$:  $A=\{f<c\}$ or $\{f>c\}$.

The above calculations involved constructing a Perron-Frobenius operator for the action of $\Phi$ on the \emph{entire domain} $M$.
In the time-dependent setting, we wish to study transport from $X\subset M$ to a small neighborhood $Y$ of $\Phi(X,t;\tau)\subset M$.  A global analysis would mean that $X=Y=M$ and a transfer operator would be constructed for all of $M$.  However, often one is interested in the situation where the domain of interest $X$ is ``open'' and trajectories may leave $X$ in a finite time (our numerical examples in Sections \ref{sect:eg1} and \ref{sect:eg2} illustrate this). Moreover, the subset $X$ may be very small in comparison to $M$.  In such instances, there are great computational savings if the analysis can be carried out using a non-global Perron-Frobenius operator defined on $X$ rather than $M$.  \emph{Our new methodology allows precisely this and is a significant theoretical and numerical advance over existing transfer operator numerics.}

%  If the subsets $X$ Until now, transfer opeator approaches would have required construction of an approximation to $\mathcal{P}$ As we are studying the flow for a finite time, the domains under consideration at the initial and final times ($X$ and $Y$, respectively), may be relatively small in relation to $M$, leading to \emph{significant computational savings}. This ability to restrict the domain is also new and has not been considered in prior research on almost-invariant sets \cite{......}.

We now describe a numerical approximation of the action of $\mathcal{P}_{t,\tau}$ from a space of functions supported on $X$ to a space of functions supported on $Y$.
%The previous works\cite{FLS10,SFM10} considered
%the proposed algorithm to identify coherent sets is rigorously justified only for a dynamical system satisfying the assumption of $X=Y$, a closed system.
%Also, the discretization of the phase space must be done on the entire closed domain.
%A main contribution of this work is to eliminate such restriction in order to extend the utility of the transfer operator approach
%to an open system. This will allow us to include the possibility that the dynamics is occurring on a small portion of $M$;indeed, at an initial time, we may be concerned with a region $X\subset M$ and at our final time, $X$ moves to $Y\subset M$ with $X\cap Y$ possibly empty.
We subdivide the subsets $X$ and $Y$ into collections of sets $\{B_1,\ldots,B_m\}$ and $\{C_1,\ldots,C_n\}$ respectively.
%, see Figure~\ref{fig:boxpartition}.
We construct a
finite-dimensional numerical approximation of the transfer operator $\mathcal{P}_{t,\tau}$, using a modification of Ulam's method~\cite{ulam}:
\begin{equation}\label{eq:measureP}
\mathbf{P}^{(\tau)}(t)_{i,j}=\frac{\ell(B_i\cap\Phi(C_j,t+\tau;-\tau))}{\ell(B_i)},
\end{equation}
where $\ell$ is a normalized volume measure. Clearly, the the matrix $\mathbf{P}^{(\tau)}(t)$ is row-stochastic by its
construction. The value $\mathbf{P}^{(\tau)}(t)_{i,j}$ may be
interpreted as the probability that a randomly chosen point in $B_i$ has its image in $C_j$.
We numerically estimate $\mathbf{P}^{(\tau)}(t)_{i,j}$ by
\begin{equation}\label{eq:numericalulameqn}
\mathbf{P}^{(\tau)}(t)_{i,j}\approx{\#\{r:z_{i,r}\in B_i,
\Phi(z_{i,r},t;\tau)\in C_j\}}/{Q},
\end{equation}
where $z_{i,r}$, $r=1,\ldots,Q$ are uniformly distributed test
points in $B_i(t)$ and $\Phi(z_{i,r},t;\tau)$ is obtained via a
numerical integration.

The numerical discretization has the useful side-benefit of producing
a discretization-induced diffusion with magnitude the order of the image of box diameters (see Lemma 2.2 \cite{froyland95sbr}).
Ultimately, in Section \ref{sect:comp} we will construct coherent sets by thresholding vectors in $\sp\{\chi_{B_1},\ldots,\chi_{B_m}\}$ and $\sp\{\chi_{C_1},\ldots,\chi_{C_n}\}$.
This discretization limits the irregularity of possible coherent sets, and in practice, high regularity is observed.

%
%
%
%Such coherent pairs are not computationally realizable since the diffusion is a by-product of the transfer operator construction, which is a main tool for this work. For this reason, the identification based on the transfer operator will typically capture a coherent pair that is robust to small diffusions and hence we will be interested only in those pairs that are regular.

\section{Coherent partitions}
\label{sect:tech}
%Let $X\subset M$ denote the domain to be studied at time $t$ for a nonautonomous system $T:=\Phi(\cdot,t,\tau)$
%and $Y\supset T(X)$, $Y\subset M$ denote the domain at time $t+\tau$.
%We partition $X$ and $Y$ into collections of sets $\{B_1,\ldots,B_m\}$ and $\{C_1,\ldots,C_n\}$ respectively.
For the remainder of the paper we set $P_{ij}=\mathbf{P}^{(\tau)}(t)_{i,j}$, fixing $t$ and $\tau$.
%We now construct an Ulam matrix $P_{ij}$, $i=1,\ldots,m$ and $j=1,\ldots,n$ as described above.
We set $p_i=\mu(B_i), i=1,\ldots,m$ and assume that $p_i>0$ for all $i=1,\ldots,n$ (if some sets $B_i$ have zero reference measure, we remove them from our collection as there is no mass to be transported).
%
%If $p_i=0$ for some $i$, then we remove those partition On $X$ we choose a reference probability measure that gives a mass $p_i>0$ to the set $B_i$, $i=1,\ldots,m$ (if some sets have zero reference measure, we remove them from our collection).
%This reference probability measure describes the mass we wish to use to define our discrete version of coherence in (\ref{eq:invariant}).
Define $q=pP$ to be the image probability vector on $Y$;  we assume $q>0$ (if not, we remove sets $C_j$ with $q_j=0$).  The probability vector $q$ defines a probability measure $\nu$ on $Y$ via $\nu(Y')=\sum_{j=1}^n q_j\ell(Y'\cap C_j)$ for measurable $Y'\subset Y$. We may think of the probability measure $\nu$ as the discretized image of $\mu$.

\subsection{Problem setup}
To find the most coherent pair, we first try to partition $X$ and $Y$ as $X_1\cup X_2$ and $Y_1\cup Y_2$ in a particular way, where $X_1, X_2, Y_1, Y_2$ all have measure approximately 1/2.  This restriction will be relaxed later.
Let $I_1,I_2$ partition $\{1,\ldots,m\}$ and $J_1,J_2$ partition $\{1,\ldots,n\}$ and set $X_k=\cup_{i\in I_k} B_i$ and $Y_k=\cup_{j\in J_k} C_j$, $k=1,2$.
We desire:
\begin{enumerate}
\item $\mu(X_k)=\sum_{i\in I_k}p_i\approx 1/2, \nu(Y_k)=\sum_{j\in J_k}q_i\approx 1/2, k=1,2$

    (the sets $X_1, X_2$ and $Y_1, Y_2$ partition $X$ and $Y$ into two sets of roughly equal $\mu$-mass and $\nu$-mass respectively).
\item $\rho_\mu(X_k,Y_k)\approx 1, k=1,2$

(this is a measure-theoretic way of saying that $\Phi(X_k,t;\tau)\approx Y_k$, $k=1,2$).
\end{enumerate}

\subsection{Solution approach}
Introduce the inner products $\langle x_1,x_2\rangle_p=\sum_i x_{1,i}x_{2,i}p_i$ and $\langle y_1,y_2\rangle_q=\sum_j y_{1,j}y_{2,j}q_j$.
We form a normalized matrix $L_{ij}=p_iP_{ij}/q_j$. The $L$ resulting from this normalization of $P$ ensures that $\mathbf{1}L=\mathbf{1}$.  We think of $L$ as a transition matrix from the inner product space $(\mathbb{R}^m,\langle\cdot,\cdot\rangle_p)$ to the inner product space $(\mathbb{R}^n,\langle\cdot,\cdot\rangle_q)$, which takes a uniform density on $(\mathbb{R}^m,\langle\cdot,\cdot\rangle_p)$ (representing the measure $\mu$) to a uniform density on $(\mathbb{R}^n,\langle\cdot,\cdot\rangle_q)$ (representing the measure $\nu$).
%While $P$ maps $p$ to $q$, the transition matrix $L$ maps a uniform density on $X$ to a uniform density on $Y$. %Note $\mathbf{1}L=\mathbf{1}$ and $\mathbf{1}L^*=\mathbf{1}$, where $L^*$ denotes the dual of $L$.

To describe 2-partitions of $X$ and $Y$ we consider vectors $x\in\{\pm 1\}^m, y\in \{\pm 1\}^n$ and define $X_1=\bigcup_{i:x_i=1}B_i, X_2=\bigcup_{i:x_i=-1}B_i, Y_1=\bigcup_{i:y_i=1}C_i, Y_2=\bigcup_{i:y_i=-1}C_i$.
Thus, the partitions $I_1,I_2$ and $J_1,J_2$ are described by the parity of $x$ and $y$ respectively.
We can write the condition $\mu(X_k)=\sum_{i\in I_k}p_i\approx 1/2, \nu(Y_k)=\sum_{j\in J_k}q_i\approx 1/2, k=1,2$ as $|\langle x,\mathbf{1}\rangle_p|, |\langle y,\mathbf{1}\rangle_q|<\epsilon$ for small $\epsilon$ (the $\epsilon$ is needed as it may be impossible to form finite collections of sets $B_i$ and $C_j$ with measure exactly 1/2).

Consider the problem:
\begin{equation}
\label{combinatorial}
\max\{\langle xL,y\rangle_q: x\in\{\pm 1\}^m, y\in \{\pm 1\}^n, |\langle x,\mathbf{1}\rangle_p|, |\langle y,\mathbf{1}\rangle_q|<\epsilon\}
\end{equation}
for some small $\epsilon$.

The objective
\begin{eqnarray*}\label{eq:objective}
\langle xL,y\rangle_q&=&\left(\sum_{i\in I_1,j\in J_1} L_{ij}q_j+\sum_{i\in I_2,j\in J_2} L_{ij}q_j\right)-\left(\sum_{i\in I_1,j\in J_2} L_{ij}q_j+\sum_{i\in I_2,j\in J_1} L_{ij}q_j\right)\\
&=&\left(\sum_{i\in I_1,j\in J_1} p_iP_{ij}+\sum_{i\in I_2,j\in J_2} p_iP_{ij}\right)-\left(\sum_{i\in I_1,j\in J_2} p_iP_{ij}+\sum_{i\in I_2,j\in J_1} p_iP_{ij}\right)\\
&\approx&\left(\mu(X_1\cap \Phi(Y_1,t+\tau; -\tau))+\mu(X_2\cap \Phi(Y_2,t+\tau; -\tau))\right)\\
&\quad&-\left(\mu(X_1\cap \Phi(Y_2,t+\tau; -\tau))+\mu(X_2\cap \Phi(Y_1,t+\tau; -\tau))\right)\\
&=&\mu(X_1)\rho_\mu(X_1,Y_1)+\mu(X_2)\rho_\mu(X_2,Y_2)-\mu(X_1)\rho_\mu(X_1,Y_2)-\mu(X_2)\rho_\mu(X_2,Y_1).
\end{eqnarray*}
Thus, maximizing $\langle xL,y\rangle_q$ is a very natural way to achieve our aim of finding partitions so that $\rho_\mu(X_k,Y_k)\approx 1, k=1,2$.
The approximation in the above reasoning occurs because
$P_{ij}\approx \mu(B_i\cap\Phi(C_j,t+\tau;-\tau))/\mu(B_i)$\footnote{If $\mu$ is absolutely continuous with a positive density that is Lipschitz on the interior of each $B_i$, then this error goes to zero with decreasing diameter of $B_i$ and $C_j$;  see Lemma 3.6 \cite{froyland98}.}.

The problem (\ref{combinatorial}) is a difficult combinatorial problem;  as a heuristic means of finding a good solution we relax the binary restriction on $x$ and $y$ and allow them to take on continuous values.
We will interpret the values of $x$ and $y$ as ``fuzzy inclusions'';  if $x_i$ is very positive, then $B_i$ is very likely to belong to $X_1$, and if $x_i$ is very negative, then $B_i$ is very likely to belong to $X_2$.
Similarly for strong positivity or negativity of $y_i$ and inclusion of $B_i$ in $Y_1$ or $Y_2$ respectively.
If the value of $x_i$ or $y_i$ is near to zero, the fuzzy inclusion is less certain and we use an optimization in Algorithm \ref{alg1} (Section 3.3) to determine where $B_i$ belongs.

As $x$ and $y$ can now float freely, we can set $\epsilon=0$, and thus may insist that $\langle x,\mathbf{1}\rangle_p=\langle y,\mathbf{1}\rangle_q=0$.
When restricting $x$ and $y$ to be elements of $\{\pm 1\}^m$ and $\{\pm 1\}^n$, we implicitly set the norms $\|x\|_p=\langle x,x\rangle_p^{1/2}$ and $\|y\|_q=\langle y,y\rangle_q^{1/2}$ to both be 1.
Now that we let $x$ and $y$ freely float, we must include normalization terms in our objective.
Thus, the relaxed problem is
\begin{equation}
\label{relaxed}
\max_{x\in\mathbb{R}^m, y\in\mathbb{R}^n}\left\{\frac{\langle xL,y\rangle_q}{\|x\|_p\|y\|_q}: \langle x,\mathbf{1}\rangle_p=\langle y,\mathbf{1}\rangle_q=0\right\}
\end{equation}
We will use the optimal $x$ and $y$ to create our partition $X_1, X_2$ and $Y_1, Y_2$ via $X_1=\bigcup_{i:x_i>b}B_i, X_2=\bigcup_{i:x_i<b}B_i, Y_1=\bigcup_{i:y_i>c}C_i, Y_2=\bigcup_{i:y_i<c}C_i$, where $b$ and $c$ are chosen so that $\sum_{i\in I_k}p_i\approx 1/2 \sum_{j\in J_k}q_i\approx 1/2, k=1,2$.
As an extension to our heuristic, we may also relax the condition that the measures of $X_1, X_2$ and $Y_1, Y_2$ are all approximately 1/2, and only enforce $\mu(X_k)=\sum_{i\in I_k}p_i\approx \sum_{j\in J_k}q_i=\nu(Y_k), k=1,2$. This would mean that while there is some flexibility in the choice of $b$, the value $c$ is a function of $b$;  see Algorithm \ref{alg1}.

We close this section with a lemma stating the solution to (\ref{relaxed}).
\label{sect:comp}
\begin{lemma}
\label{complemma}
Let $\Pi_p$ be an $m\times m$ diagonal matrix with $p$ on the diagonal and $\Pi_q$ be an $n\times n$ diagonal matrix with $q$ on the diagonal.  Suppose that $PP^\top$ is an irreducible matrix\footnote{there exists a $k$ such that $(PP^\top)^k>0$.}.
The value of (\ref{relaxed}) is $\sigma_2$, the second largest singular value of $\Pi_p^{1/2}P\Pi_q^{-1/2}$, and the maximizing $x$ and $y$ in (\ref{relaxed}) are given by $x=\hat{x}\Pi_p^{-1/2}$ and $y=\hat{y}\Pi_q^{-1/2}$, where $\hat{x}$ and $\hat{y}$ are the corresponding left and right singular vectors.
\end{lemma}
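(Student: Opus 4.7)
The plan is to reduce (\ref{relaxed}) to a standard constrained singular value problem for the rescaled matrix $A:=\Pi_p^{1/2}P\Pi_q^{-1/2}$ by a change of variables, and then to read off the answer from the SVD variational principle, using Perron--Frobenius to guarantee that the leading singular direction is exactly the one ruled out by the constraints.

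First I would set $\hat x=x\Pi_p^{1/2}$ and $\hat y=y\Pi_q^{1/2}$. A direct calculation gives $\|x\|_p=\|\hat x\|_2$, $\|y\|_q=\|\hat y\|_2$, and, using $L_{ij}=p_iP_{ij}/q_j$,
\[
\langle xL,y\rangle_q=\sum_{i,j}x_ip_iP_{ij}y_j=\hat x A\hat y^\top.
\]
The linear constraints become $\hat x\perp\sqrt{p}$ and $\hat y\perp\sqrt{q}$ in the standard Euclidean inner product, where $\sqrt{p}$ denotes the vector with entries $\sqrt{p_i}$ (and similarly for $\sqrt{q}$); note $\|\sqrt{p}\|_2=\|\sqrt{q}\|_2=1$ since $p$ and $q$ are probability vectors. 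Thus (\ref{relaxed}) is equivalent to maximizing $\hat xA\hat y^\top/(\|\hat x\|\|\hat y\|)$ over $\hat x\perp\sqrt{p}$, $\hat y\perp\sqrt{q}$.

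Next I would identify $\sqrt{p}$ and $\sqrt{q}$ as the top left and right singular vectors of $A$ with singular value $1$. Since $P$ is row-stochastic, $A\sqrt{q}=\Pi_p^{1/2}P\mathbf{1}=\sqrt{p}$; since $q=pP$, $A^\top\sqrt{p}=\Pi_q^{-1/2}P^\top p^\top=\sqrt{q}$. Hence $AA^\top\sqrt{p}=\sqrt{p}$. To show that $\sigma_1=1$ is in fact the \emph{largest} singular value and that it is \emph{simple}, I would use the irreducibility hypothesis: $AA^\top=\Pi_p^{1/2}P\Pi_q^{-1}P^\top\Pi_p^{1/2}$ is a nonnegative symmetric matrix whose zero pattern coincides with that of $PP^\top$ (conjugation by the strictly positive diagonals $\Pi_p^{1/2},\Pi_q^{-1}$ preserves zeros), so $AA^\top$ is irreducible. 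Perron--Frobenius then yields a simple largest eigenvalue with a strictly positive eigenvector; since $\sqrt{p}>0$ is already an eigenvector with eigenvalue $1$, this must be the Perron eigenvalue, so $\sigma_1=1>\sigma_2$.

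Finally, I would apply the SVD variational principle. Writing $A=\sum_k\sigma_ku_kv_k^\top$ with $u_1=\sqrt{p}$, $v_1=\sqrt{q}$, the constraints $\hat x u_1=\hat y v_1=0$ remove the $k=1$ term, so by Cauchy--Schwarz and Parseval
\[
|\hat xA\hat y^\top|\le\sigma_2\sqrt{\sum_{k\ge2}(\hat xu_k)^2}\sqrt{\sum_{k\ge2}(\hat y v_k)^2}\le\sigma_2\|\hat x\|\|\hat y\|,
\]
with equality at $\hat x=u_2$, $\hat y=v_2$. Translating back via $x=\hat x\Pi_p^{-1/2}$, $y=\hat y\Pi_q^{-1/2}$ recovers the claimed optimizers. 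The only nontrivial step is the Perron--Frobenius argument that promotes irreducibility of $PP^\top$ to simplicity of $\sigma_1=1$; the change of variables and the variational computation are routine.
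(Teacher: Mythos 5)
Your proof is correct and follows essentially the same route as the paper's: change of variables to the symmetrized matrix $\Pi_p^{1/2}P\Pi_q^{-1/2}$, Perron--Frobenius applied to the (irreducible) Gram matrix to show the leading singular value is the simple value $1$ with positive singular vectors $\sqrt{p},\sqrt{q}$, and then the variational characterization of $\sigma_2$. The only cosmetic difference is that the paper first eliminates the $y$-constraint by observing the optimal $y$ is $xL$ and reduces to a one-variable Courant--Fischer problem, whereas you keep both constraints and handle them directly via the SVD expansion and Cauchy--Schwarz; both are equally valid.
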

\begin{proof}
See appendix.
\end{proof}

\subsection{Extraction of coherent pairs}
We now detail the procedure that extracts the coherent pairs $X_k, Y_k$ from the vectors $x$ and $y$ identified in Lemma \ref{complemma}.
 %present the numerical algorithm to extract the coherent sets $A_t$ and $A_{t+\tau}$ based on the singular vectors
We create sets that are unions of boxes with $x$ and $y$ values above certain thresholds. Define
${X}_1(b):=\bigcup_{i:x_i>b}B_i$ and
${Y}_{1}(c):=\bigcup_{j:y_j>c}C_j$,
$b,c\in\mathbb{R}$.
%Denote
%$\mu_n(\widehat{A}_t^+(c))=\sum_{i:\hat{x}_2>b}p_i$ and
%$\mu_n(\widehat{A}_{t+\tau}^+(c))=\sum_{i:\hat{y}_2>c}q_i$.
% we use the following thresholding
%algorithm: Let $A_t=\cup_{i\in I_t}B_i$ and $A_{t+\tau}=\cup_{i\in
%I_{t+\tau}}B_i$ both be defined as unions of boxes and assume
%$\sum_{i\in I_t}p_i$ is very close to $\sum_{i\in I_{t+\tau}}p_i$.
%These latter two quantities represent $p$-mass measures of the sets
%$A_t, A_{t+\tau}$ and will also be denoted $\mu_n(A_{t})$ and
%$\mu_n(A_{t+\tau})$.
Define
\begin{equation}\label{eq:coherence}
\tilde{\rho}(\tilde{X}_1(b),\tilde{Y}_1(c))=\frac{\sum_{i:B_i\subset \tilde{X}_1(b),j:C_j\subset \tilde{Y}_1(c)}p_iP_{ij}}{\sum_{i:B_i\subset \tilde{X}_1(b)}p_i}.
\end{equation}

The quantity $\tilde{\rho}$ measures the discretized coherence for the pair
$\tilde{X}_1(b), \tilde{Y}_1(c)$. Our procedure to vary the thresholds $b$ and $c$ so as to select $\tilde{X}_1(b)$ and $\tilde{Y}_1(c)$ with largest $\tilde{\rho}$ value is summarized below:
\begin{algorithm}
\label{alg1}
\begin{enumerate}\quad \label{enum:algorithm}
\item Let $\eta(b)=\arg\min_{c'\in\mathbb{R}}\bigl|
\mu(\tilde{X}_1(b))-\nu(\tilde{Y}_1(c'))\bigr|$.
This is to make $\nu(\tilde{Y}_1(c'))$ as close as possible to $\mu(\tilde{X}_1(b))$.
\item Set $b^*=\arg\max\tilde\rho(\tilde{X}_1(b),\tilde{Y}_1(\eta(b)))$. The value of $b^*$
is selected to maximize the coherence.
\item Define
$A_t=X_1:=\tilde{X}_1(b^*)$ and
$A_{t+\tau}=Y_1:=\tilde{Y}_1(\eta(b^*))$.
\end{enumerate}
\end{algorithm}
%We remark that one has to ensure that the sign of $\hat{x}_2$ and
%$\hat{y}_2$ manifest the same ``parity", i.e., the salient features
%of $\hat{x}_2$ and $\hat{y}_2$ to be extracted must have the same sign.
To obtain $X_2$ and $Y_2$, we define $X_2=\tilde{X}_2(b^*):=\bigcup_{i:x_i\le b^*}B_i$ and
$Y_2=\tilde{Y}_{2}(\eta(b^*)):=\bigcup_{j:y_j\le \eta(b^*)}C_i$, the complements of $X_1$ and $Y_1$ in $X$ and $Y$, respectively.
Thus, we select $X_1$ and $Y_1$ to be the most coherent pair and define $X_2$ and $Y_2$ as their respective complements.
One now should repeat Algorithm \ref{alg1} with $\tilde{X}_2(b):=\bigcup_{i:x_i\le b}B_i$ and
$\tilde{Y}_{2}(c):=\bigcup_{j:y_j\le c}C_j$,
$b,c\in\mathbb{R}$ in place of $\tilde{X}_1(b)$ and $\tilde{Y}_1(c)$ to search from ``the negative end'' of the vectors $x$ and $y$, possibly picking up a pair with higher coherence, and defining $X_1, Y_1$ as the complements of $X_2, Y_2$.

%
%
%
%
%$b,c\in\mathbb{R}$, and repeat the above procedure.
%%
%%One can replace $\widehat{A}^+_{t}(c)$ with
%%$\widehat{A}^{-}_t(c):=\bigcup_{i:\hat{x}_2<c}B_i$ and
%%$\widehat{A}^{+}_{t+\tau}(c)$ with
%%$\widehat{A}^{-}_{t+\tau}(c):=\bigcup_{i:\hat{y}_2<c}B_i$.
%We remark that in some practical cases, we may be interested only in the coherent pair of $A_t=X_1$ and $A_{t+\tau}=Y_1$ (e.g., the vortex presented in section~\ref{sect:eg2}), so we may set $X_2$ to be the complement of $X_1$ and similarly for $Y_2$. Also, we remark that when there exists
%more than two coherent pairs in $M$, the above (bipartitioning) algorithm is not applicable. The other singular vectors are required in the analysis of the coherent multi-partitioning by other methods, such as the classical K-means algorithm \cite{...}. However, one may avoid such situation by studying a subset of $M$, which contains only two coherent pairs.
%
%
%\todo{INCLUDE SOME DISCUSSION HERE ABOUT HOW ONE MIGHT CONSIDER THE COHERENT PARTITION TO GIVE RISE TO A COHERENT PAIR eg. $X_1$ and $Y_1$ AND ``THE REST'' (COMPLEMENT OF COHERENT PAIR) AND ALSO TALK VERY BRIEFLY ABOUT OTHER SINGULAR VECTORS.}
\section{Example 1: Idealized Stratospheric flow}
\label{sect:eg1}
We consider the Hamiltonian system $\frac{dx}{dt}=-\frac{\partial\Phi}{\partial y},\frac{dy}{dt}=\frac{\partial\Phi}{\partial x}$ where
\begin{equation}\label{eq:idealizedflow}
\begin{split}
&\Phi(x,y,t)=c_3y-U_0L\tanh(y/L)+A_3U_0L{\mathrm{sech}}^2(y/L)\cos(k_1x)\\
&+A_2U_0L{\mathrm{sech}}^2(y/L)\cos(k_2x-\sigma_2t)+A_1U_0L{
\mathrm{sech}}^2(y/L)\cos(k_1x-\sigma_1t).\\
\end{split}
\end{equation}
This quasi-periodic system represents an idealized stratospheric flow in the northern or southern hemisphere. Rypina \emph{et al.}~\cite{Rypina_etal07} show that there is a time-varying jet core oscillating in a band around $y=0$ and three Rossby waves in each of the regions above and below the jet core. The parameters studied in \cite{Rypina_etal07} are chosen so that the jet core forms a complete transport barrier between the two Rossby wave regimes above and below it. We modify some of the parameters to remove the jet core band and allow transport between the two Rossby wave regimes. We expect that the two Rossby wave regimes will form time-dependent coherent sets because transport between the two regimes is considerably less than the transport within regimes.We set the parameters as follows: $c_2/U_0=0.205$, $c_3/U_0=0.700$, $A_3=0.2, A_2=0.4$ and $A_1=0.075$, with the remaining parameters as stated in Rypina \emph{et al.}~\cite{Rypina_etal07}.

%\todo{Naratip, are the quantities above without dimensions (eg. km) dimensionless? Are some of these the same as in Rypina et al, in which case we could remove them and say ``the rest are as in Rypina et al''.}

%The phase space is  $X=S^1\times\mathbb{R}$, where $S^1$ is the interval $[0, 2\pi\cos(\pi/3)r_e]$ with the endpoints identified.
Our initial time is $t=20$ days and our final time is $t+\tau=30$ days. At our initial time we set $X=S^1\times[-2.5, 2.5]$ Mm, where $S^1$ is a circle parameterised from 0 to $6.371\pi$ Mm, and subdivide $X$ into a grid of $m=28200$ identical boxes $X=\{B_1,\ldots,B_m\}$.
This choice of $m$ is sufficiently large to represent the dynamics to a good resolution.
We compute an approximation of $\Phi(X,20;30)$ by uniformly distributing $Q=400$ sample points in each grid box and numerically calculating $\Phi(z_{i,r},20;30)$ using the standard Runge-Kutta method.
The choice of $Q$ is made so that over the flow duration, the image of boxes is well represented by the $Q$ sample points per box.
These $Q\times m$ image points are then covered by a grid of $n=34332$ boxes $\{C_1,\ldots,C_n\}$ of the same size as the $B_i$, $i=1,\ldots,m$.

We set $Y=\bigcup_{j=1}^n C_j$, covering the approximate image of $X$.
The transition matrix $P=\mathbf{P}^{(30)}_{20}$ is computed using (\ref{eq:numericalulameqn}).

As the flow is area preserving, a natural reference measure $\mu$ is Lebesgue measure, which we normalize so that $\mu(X)=1$. Thus, $\mu(B_i)=p_i=1/m$, $i=1,\ldots,m$ and so $(\Pi_p)_{ii}=1/m$, $i=1,\ldots,m$. The vector $q$ is constructed as $q=pP$.
%The measure $\nu$ on $Y$ is determined by the number of sample points $\Phi(z_{i,\ell},20;40)$ that fall in the boxes $C_j$, $j=1,\ldots,n$. We set $\nu(C_j)=q_j=\mbox{total
We compute the second largest singular value of $\Pi_p^{1/2}P\Pi_q^{-1/2}$ and the corresponding left and right singular vectors and thus determine $x$ and $y$ from Lemma \ref{complemma}. The top two singular values were computed to be $\sigma_1=1.0$ and $\sigma_2\approx0.996$.
% and $\sigma_3\approx0.995$.
We expect $x$ to determine coherent sets at time $t=20$ days and $y$ to determine coherent sets at time $t+\tau=30$ days.
Figure~\ref{fig:subfig1} and~\ref{fig:subfig2} illustrate the vectors $x$ and $y$, which provide clear separations into red (positive) and green (mostly negative) regions.
%\todo{fix up mostly positive, mostly negative remark when know the $b*$ values.}
\begin{figure}[]\label{fig:compareSVSUFTLE}
\centering
\subfigure[]{
\includegraphics[scale=.1975]{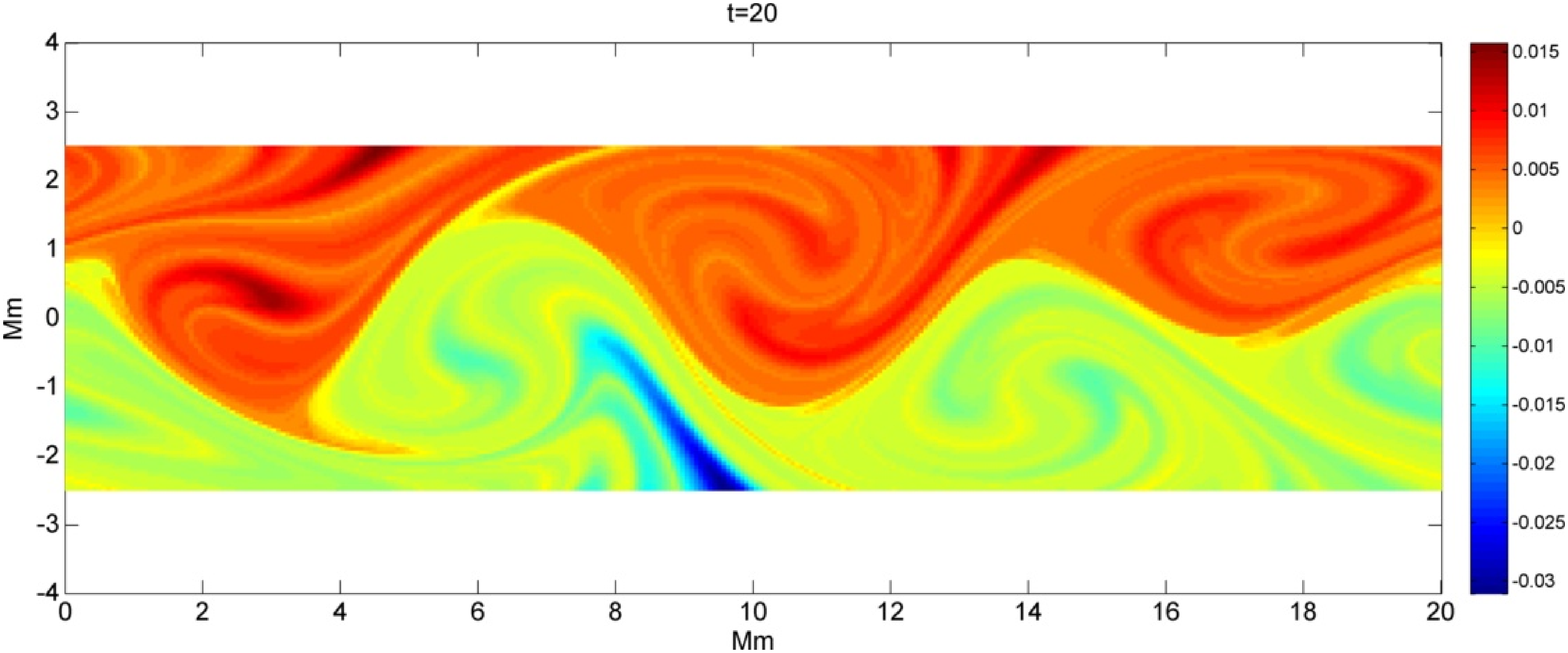}
\label{fig:subfig1}
}
\subfigure[]{
\includegraphics[scale=.1975]{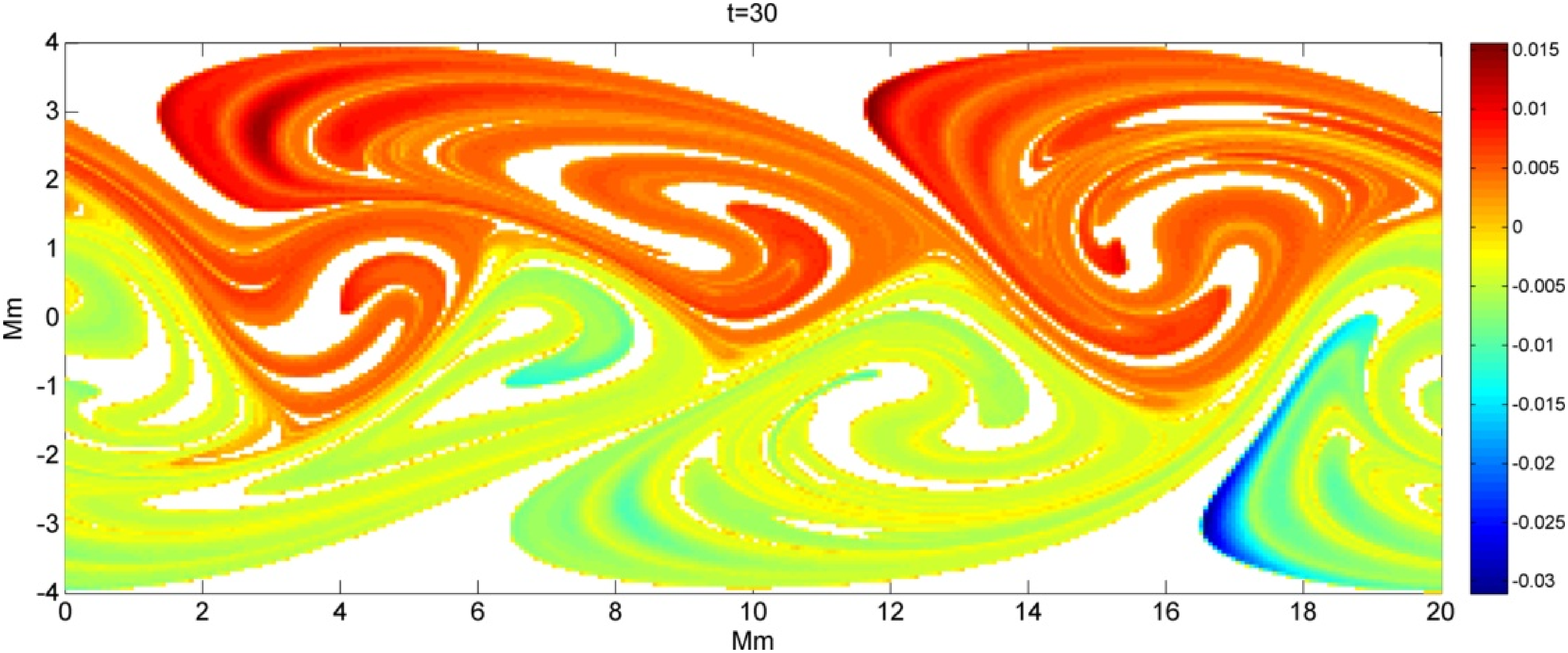}
\label{fig:subfig2}
}
\subfigure[]{
\includegraphics[scale=.2]{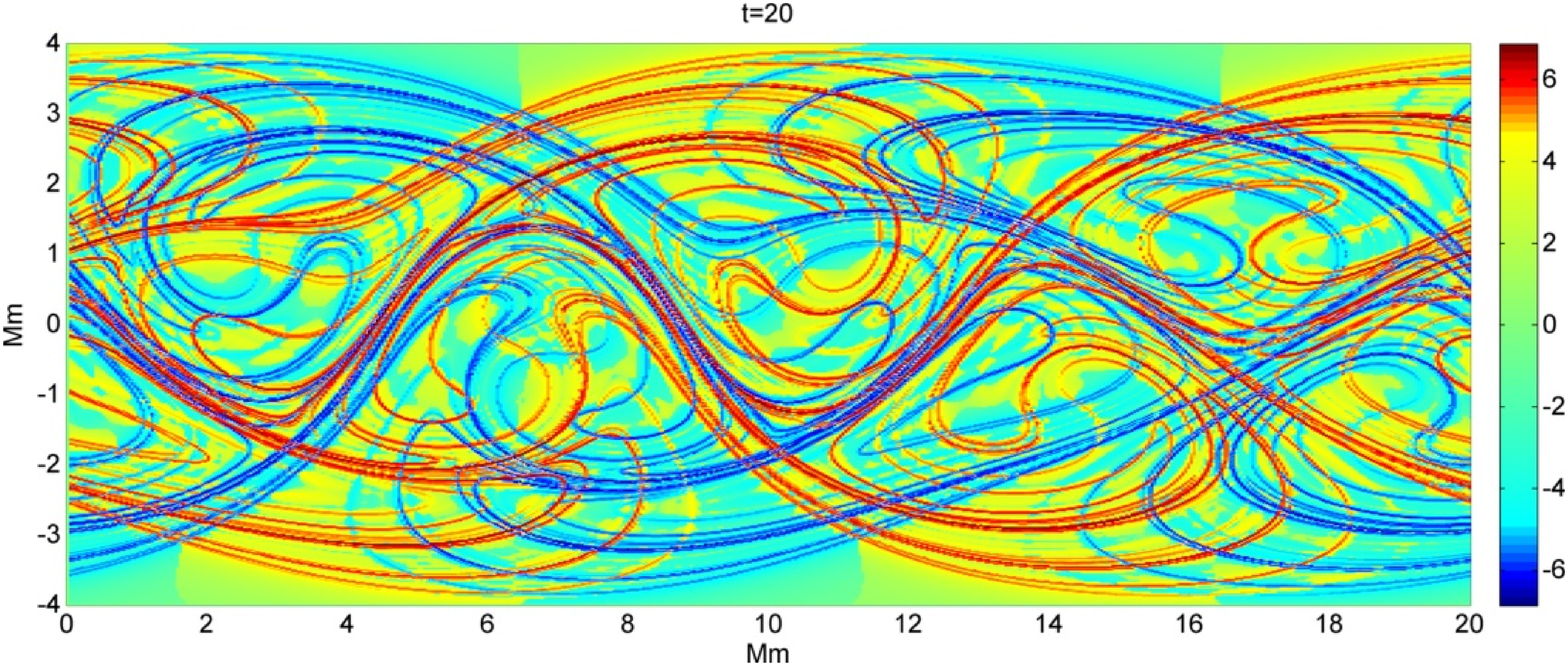}
\label{fig:subfig3}
}
\subfigure[]{
\includegraphics[scale=.2]{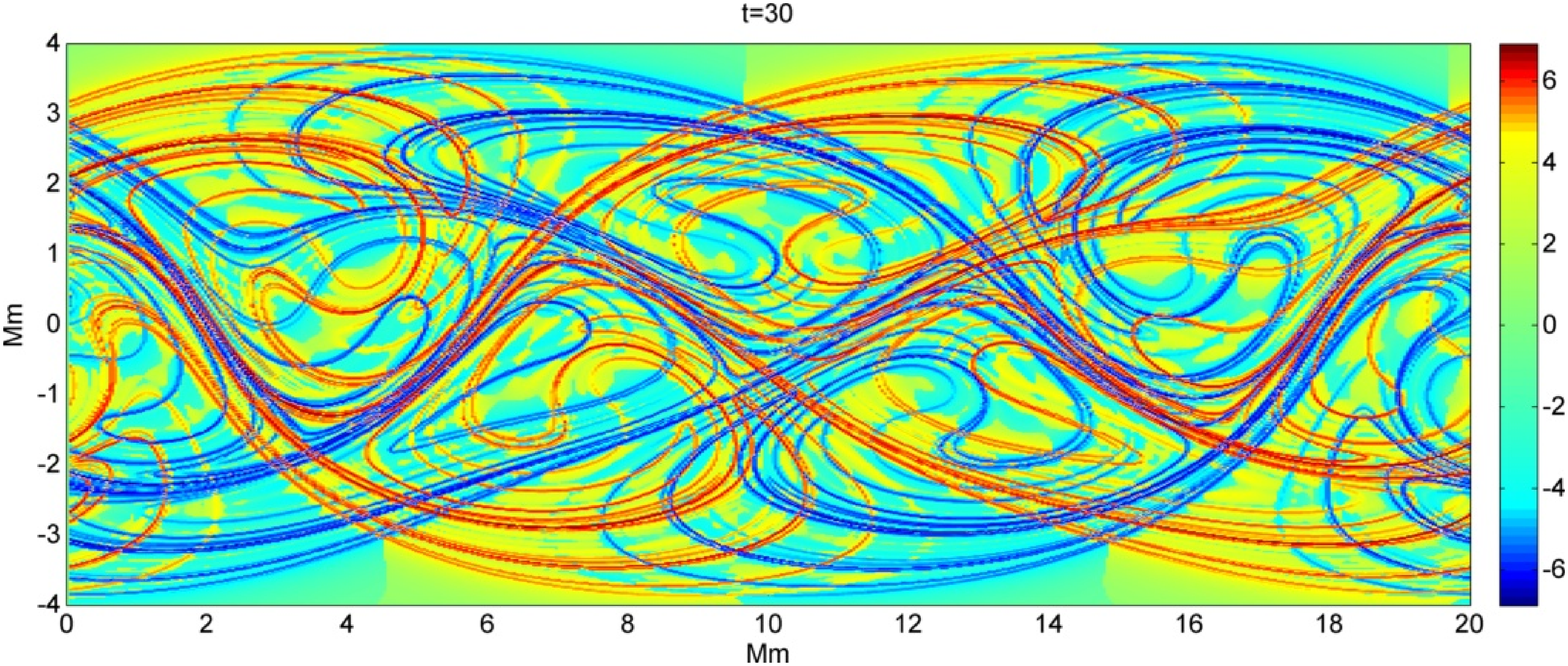}
\label{fig:subfig4}
}
\caption[]{(a) The optimal vector $x$; (b) the optimal vector $y$; (c) Backward-time (blue) and Forward-time (red) FTLEs at $t=20$ days computed with the flow time $\tau=10$ days; (d) Backward-time (blue) and Forward-time (red) FTLEs at $t=30$ days computed with the flow time $\tau=10$ days.}
\end{figure}

%to obtain the second leading singular vectors and hence the optimal $\hat{x}_2$ and $\hat{y}_2$ that maximize~\eqref{sveqn2}.
We apply the thresholding Algorithm~\ref{alg1} to the vectors $x$ and $y$ to obtain the pairs $(X_1,Y_1)$ and $(X_2,Y_2)$\footnote{When determining $X_1$ and $Y_1$, Algorithm \ref{alg1} produced values $b^*\approx0.0077$ and $\eta(b^*)\approx0.0005$.} shown in Figures \ref{fig:subfig11} and \ref{fig:subfig12}.
%\todo{Naratip, can you quote please the values for $b^*$ and $\eta(b^*)$ in the footnote?}
To demonstrate that $Y_1\approx \Phi(X_1,20;10)$, we plot the latter set in Figure \ref{fig:subfig13}.
When compared with Figure \ref{fig:subfig12} we see that there is very little leakage from $Y_1$, just a few thin filaments.
Similarly, Figures \ref{fig:subfig14} and \ref{fig:subfig12} compare $Y_2$ and $\Phi(X_2,20;10)$, again showing a small amount of leakage.
This leakage is quantified by computing $\tilde{\rho}(X_1,Y_1)\approx\tilde{\rho}(X_2,Y_2)\approx0.98$.
 % The sets $X_1$ shown in Figure~\ref{fig:subfig11} and~\ref{fig:subfig12} as red and blue sets respectively, with

%
%Most of sample points in the red set at $t=20$ (Figure \ref{fig:subfig12}) are advected to the Figure~\ref{fig:subfig13}, will be  blue part at $t=30$, see Figure~\ref{fig:subfig14} and similarly for the red part.

\begin{figure}[]\label{fig:Interface}
\centering
\subfigure[]{
\includegraphics[scale=.175]{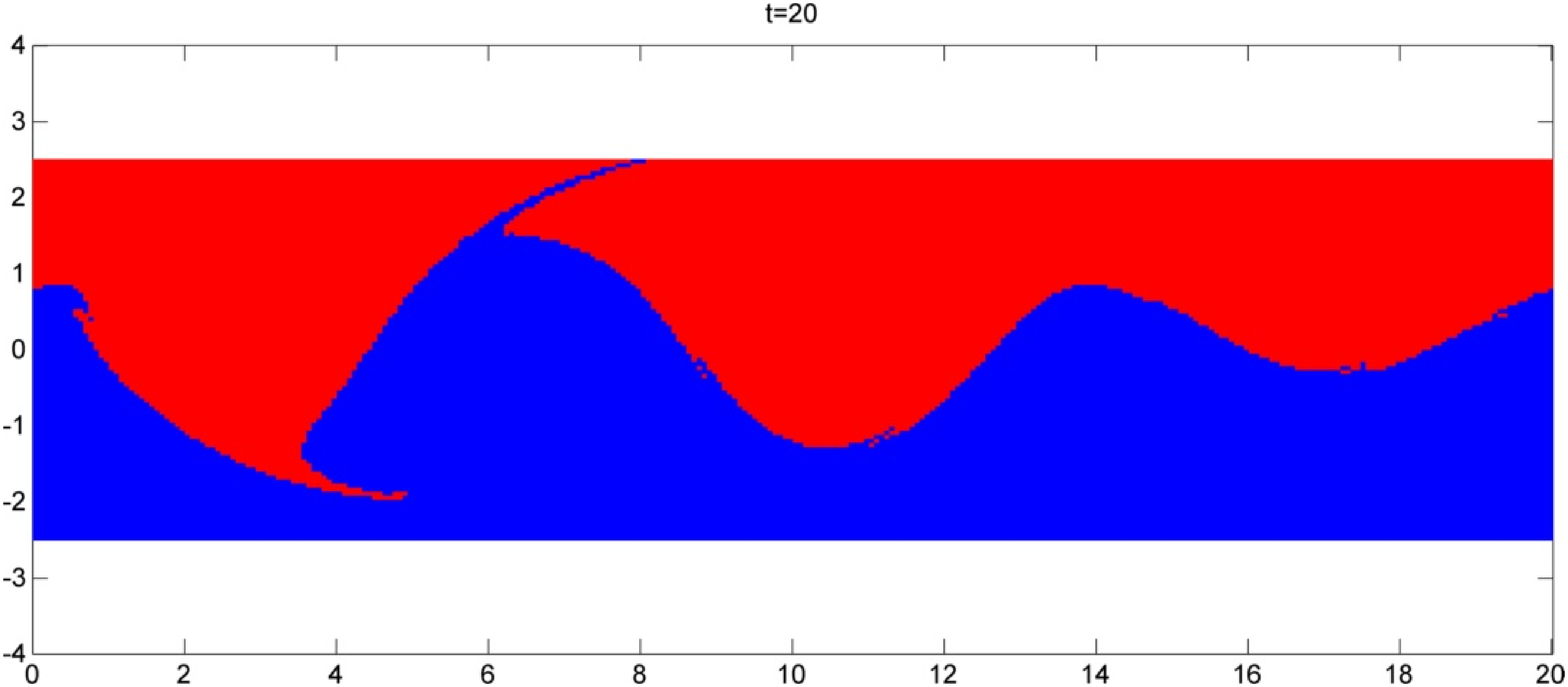}
\label{fig:subfig11}
}
\subfigure[]{
\includegraphics[scale=.175]{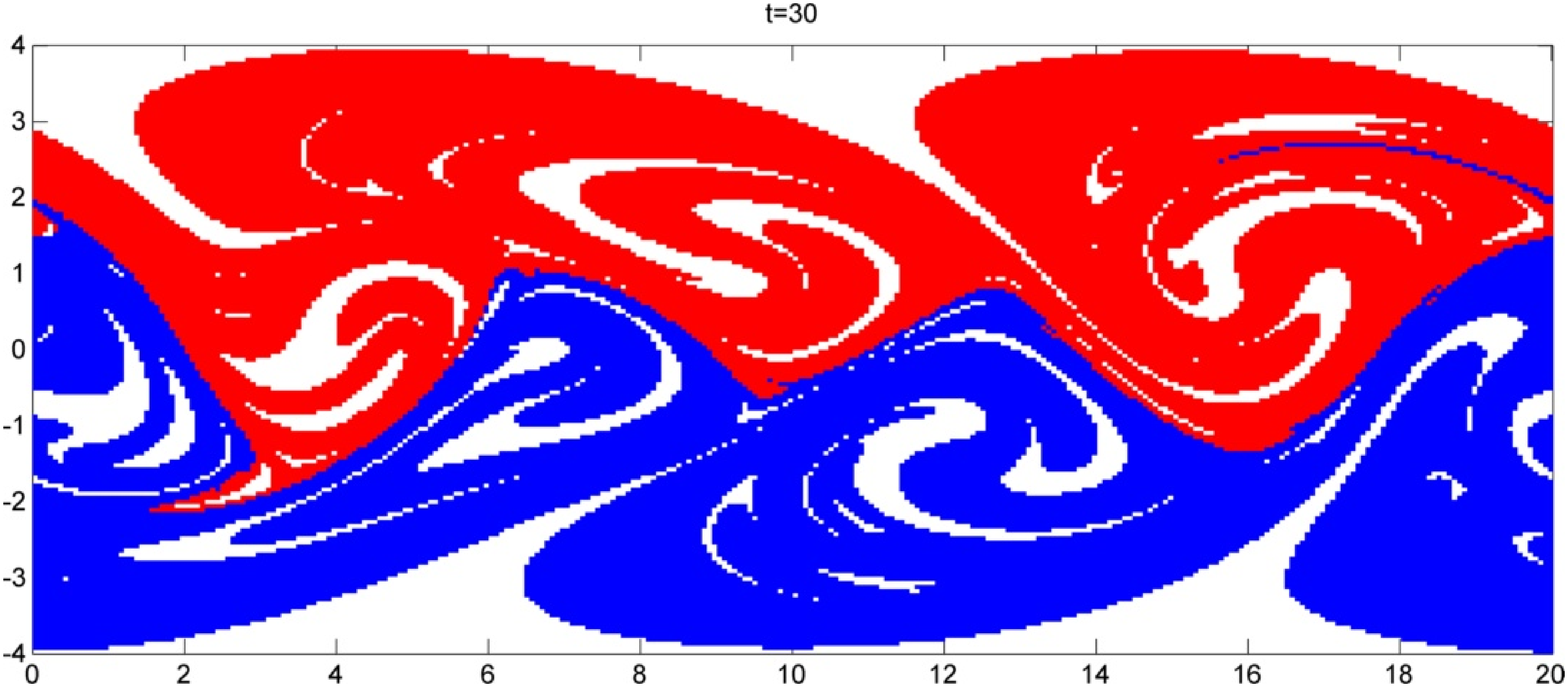}
\label{fig:subfig12}
}
\subfigure[]{
\includegraphics[scale=.175]{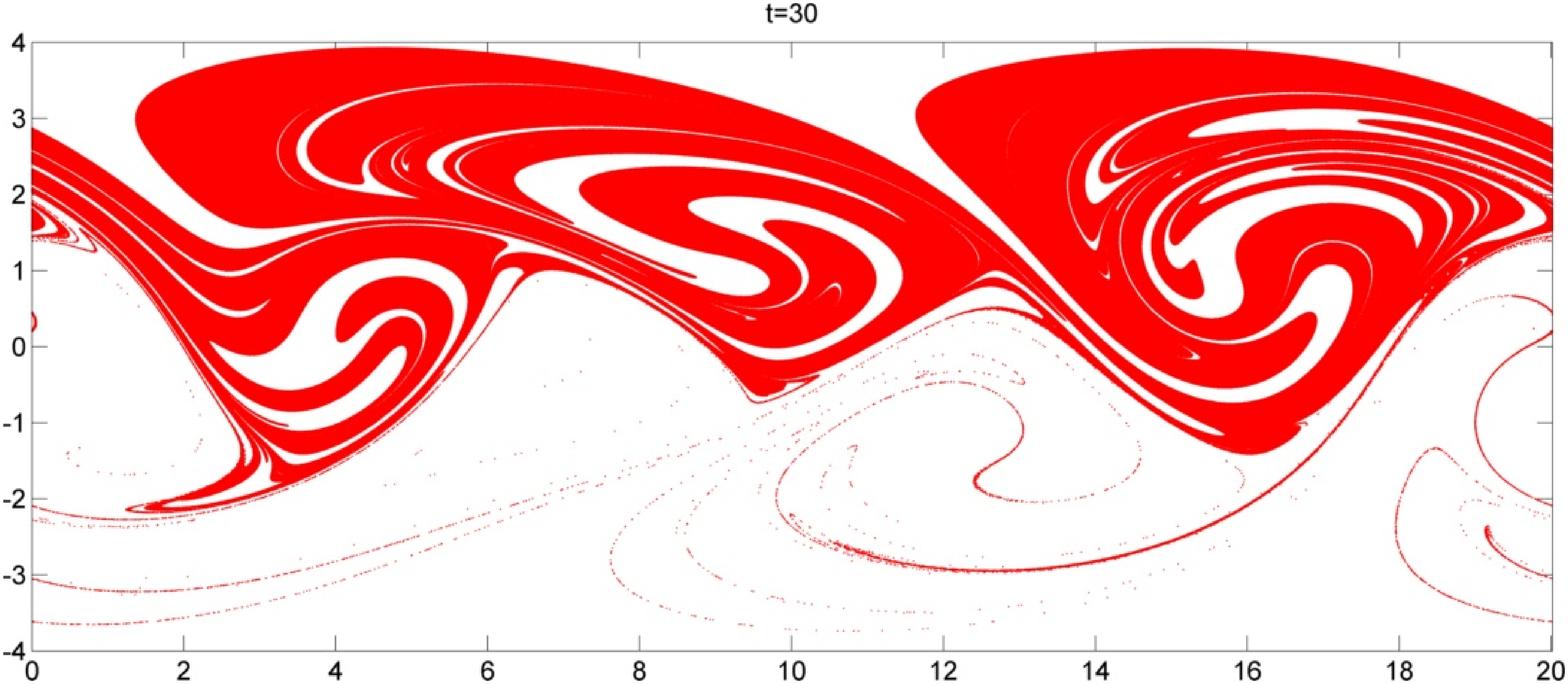}
\label{fig:subfig13}
}
\subfigure[]{
\includegraphics[scale=.175]{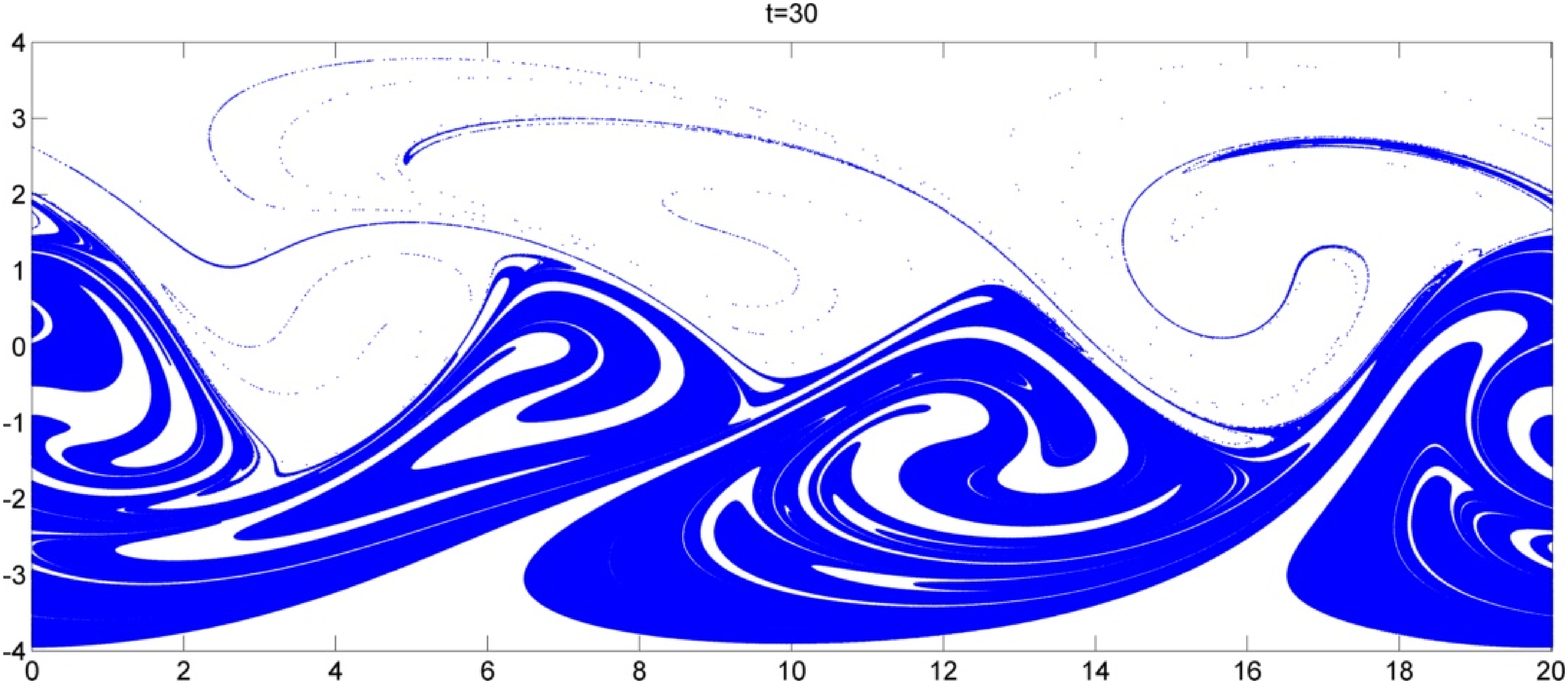}
\label{fig:subfig14}
}
\caption[]{(a) The sets $X_1$ (red) and $X_2$ (blue); (b) the sets $Y_1$ (red) and $Y_2$ (blue); (c) the set $\Phi(X_1,20;10)$; (d) the set $\Phi(X_2,20;10)$.}
 %and (b) show the optimal coherent sets at $t=20$ and $t=30$, respectively. The initial sample points in the two coherent sets (red and blue sets) at $t=20$ are advected to $t=30$. (c) and (d) show the flow images of the initial points in the red and blue set, respectively.}
\end{figure}

%\todo{Naratip, does there remain anything of the jet in the flow?  Can we analytically plot something dynamically meaningful from the ODE to show that our coherent set separation makes sense?}

We compare our results with the attracting and repelling material lines
computed via the finite-time Lyapunov exponent (FTLE) field~\cite{haller00} with the flow time $\tau=10$.
The ridges of the FTLE fields are commonly used to identify barriers to transport.
Figures~\ref{fig:subfig3} and~\ref{fig:subfig4} present an overlay of forward- and backward-time FTLEs at $t=20$ and $t=30$, respectively.
In this example, there are several FTLE ridges in the vicinity of the dominant transport barrier across the middle of the domain, and also several ridges far away from this barrier.  The FTLE ridges do not crisply and unambiguously identify the dominant transport barrier shown in Figures \ref{fig:subfig1} and \ref{fig:subfig2}.
% despite an evidence of regions
%between which mixing are relatively low as demonstrated in our results in Figure~\ref{fig:subfig11}-~\ref{fig:subfig14}.

%\begin{figure}[]
%\centering
%    \includegraphics[width=0.75\textwidth]{./SVSU2flow2_20to60x}
%     \caption{\label{fig:SVSUflow}\footnotesize{[Top] The second leading left singular vector of $\Pi_1^{1/2}P^{(t)}(\tau)\Pi_2^{-1/2}$ for $t=20$ and $\tau=40$ This vector highlights the optimal coherent set at $t=20$. [Below] The second leading right singular vector highlights the optimal coherent set at $t=60$. }}
%\end{figure}

\section{Example 2: Stratospheric polar vortex as coherent sets}
\label{sect:eg2}

In our second example, we use velocity fields obtained from the ECMWF Interim data set (http://data.ecmwf.int/data/index.html).
We focus on the stratosphere over the southern hemisphere south of 30 degrees latitude.
In this region, there are strong persistent transport barriers to midlatitude mixing during the austral winter;  these barriers give rise to the Antarctic polar vortex.  We will apply our new methodology to the ECMWF vector fields in two and three dimensions to resolve the polar vortex as a coherent set.
\subsection{Two dimensions}
Our input data consists of two-dimensional velocity fields on a $121
\times 240$ element grid in the longitude and latitude directions, respectively.
The ECMWF data provides updated velocity fields every 6 hours.
The flow is initialised at September 1, 2008 on a 475K isentropic surface and we follow the flow until September 14.
To a good approximation isentropic surfaces are close to invariant over a period about two weeks~\cite{JL01}.

We set $X= S^{1}\times[-90\degree, -30\degree]$, where $S^{1}$ is a
circle parameterized from $0\degree$ to $360\degree$.
%The flow map $\Phi(x,t;\tau)$ is obtained as a solution of the non-autonomous ODE
%$\frac{dx}{dt}=f(x(t),t)$
%The pressure variation with longitude is relatively small, so for the purposes of estimating the mass of grid boxes we assume that the pressure is independent of longitude.
%This allows a simple construction\footnote{The domain $X$ is initially subdivided into multiple equal area strips (the width of each strip is proportional to cosine of the latitude). These strips are then subdivided in the longitude direction, inversely proportional to the pressure at the latitude of the strip to obtain equal mass boxes.} of the grid boxes $B_i$, $i=1,\ldots,m$ so that the fractional mass of particles contained in each $B_i$, $i=1,\ldots,m$ is approximately equal.
%We thus set the reference measure $p_i=$ for all $i=1,\ldots,m$, where $m=13471$ in this example.
The domain $X$ is initially subdivided into the grid boxes $B_i$, $i=1,\ldots,m$, where $m=13471$ in this example.
Based on the hydrostatic balance and the ideal gas law, we set the reference measure $p_i=\mathrm{Pr}_i^{5/7}\mathrm{a}_i$ for all $i=1,\ldots,m$, where $\mathrm{Pr}_i$ is the pressure at the center point of $B_i$ and $\mathrm{a}_i$ is the area of box $B_i$.

Using $Q=100$ sample points $z_{i,r}$, $r=1,\ldots,Q$ uniformly distributed in each grid box $B_i$, $i=1,\ldots,m$ we calculate an approximate image $\Phi(X,t;\tau)$\footnote{We use the standard Runge-Kutta method with
step size of $3/4$ hours. Linear interpolation is used to evaluate
the velocity vector of a tracer lying between the data grid points
in the longitude-latitude coordinates. In the temporal
direction the data is independently affinely interpolated.} and cover this approximate image with $m=14395$ boxes $\{C_1,\ldots,C_n\}$ to produce the image domain $Y$.
We construct $P=\mathbf{P}^{(t)}_{\tau}$ as described earlier using the same $Q\times m$ sample points.

%\todo{Is above sentence an assumption or just a computation?}
We compute $x$ and $y$ as described in Lemma \ref{complemma};  graphs of these vectors are shown in Figure~\ref{fig:2D} (Upper left and Upper right).
Figure \ref{fig:2D} (Lower left and Lower right) shows the result of Algorithm \ref{alg1}, extracting coherent sets $A_t$ and $A_{t+\tau}$ from the vectors $x$ and $y$.
We calculate the coherent ratio $\rho_\mu(A_t,A_{t+\tau})\approx 0.991$, which means that 99.1\% of the mass in $A_t$ (September 1, 2008) flows into $A_{t+\tau}$ (September 14, 2008), demonstrating a very high level of coherence.
% in the
%longitude and latitude and pressure level directions..
 %we construct a
%finite-dimensional approximation based on Ulam's approach as described by~\eqref{eq:numericalulameqn}.
%We set $Q=100$ in our experiments and calculate
%$\Phi(y_{i,\ell},t;\tau)$ using the standard Runge-Kutta method with
%step size of $3/4$ hours.
%The step size of
%$3/4$ hours is small enough to guarantee that a tracer will usually
%not flow to a neighboring data grid set; this limits the numerical
%integration error.
\begin{figure}[hbt]
\centering
\subfigure{\includegraphics[width=0.325\textwidth]{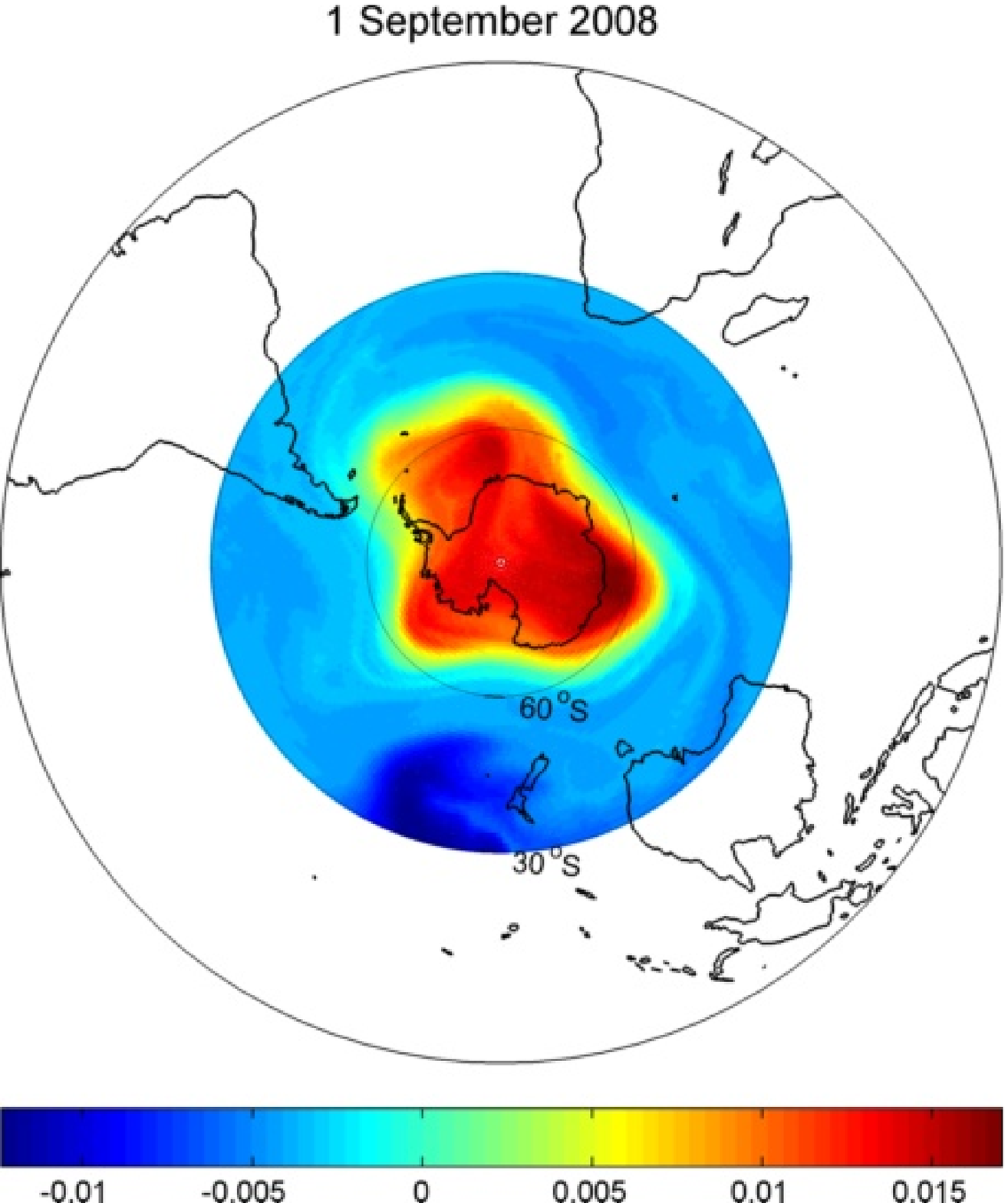}}\qquad\qquad
\subfigure{\includegraphics[width=0.325\textwidth]{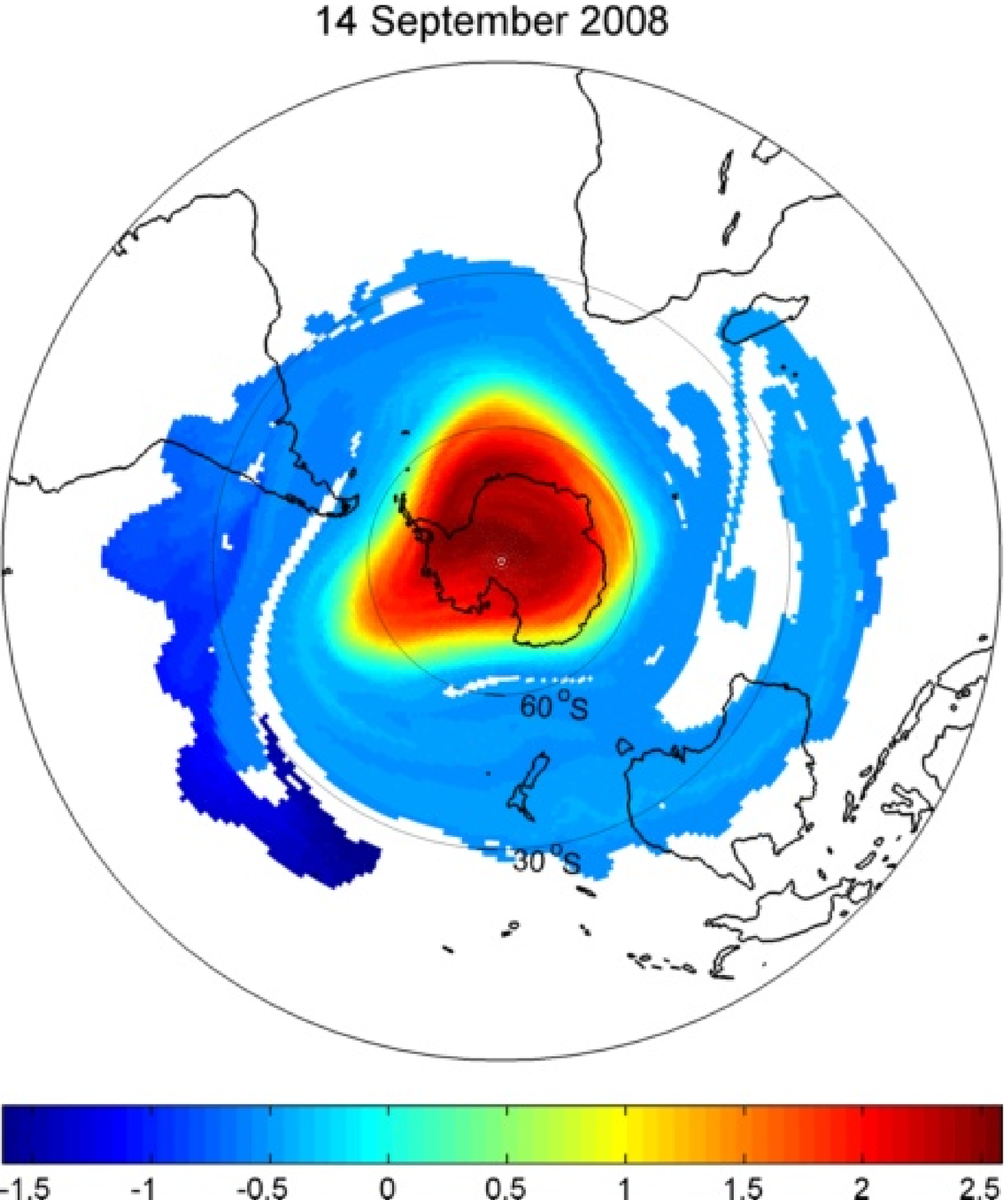}}\qquad\qquad
\subfigure{\includegraphics[width=0.325\textwidth]{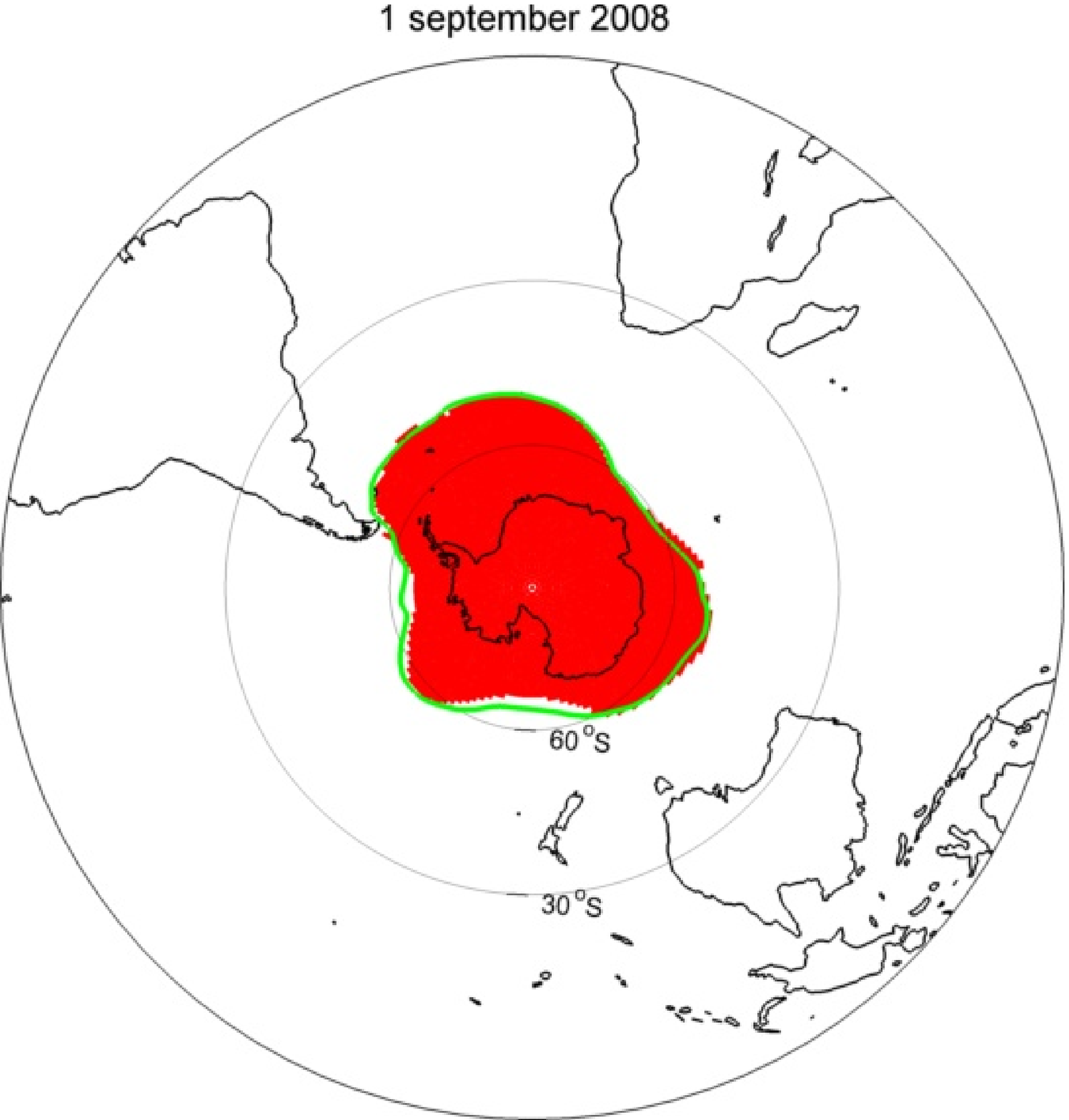}}\qquad\qquad
\subfigure{\includegraphics[width=0.325\textwidth]{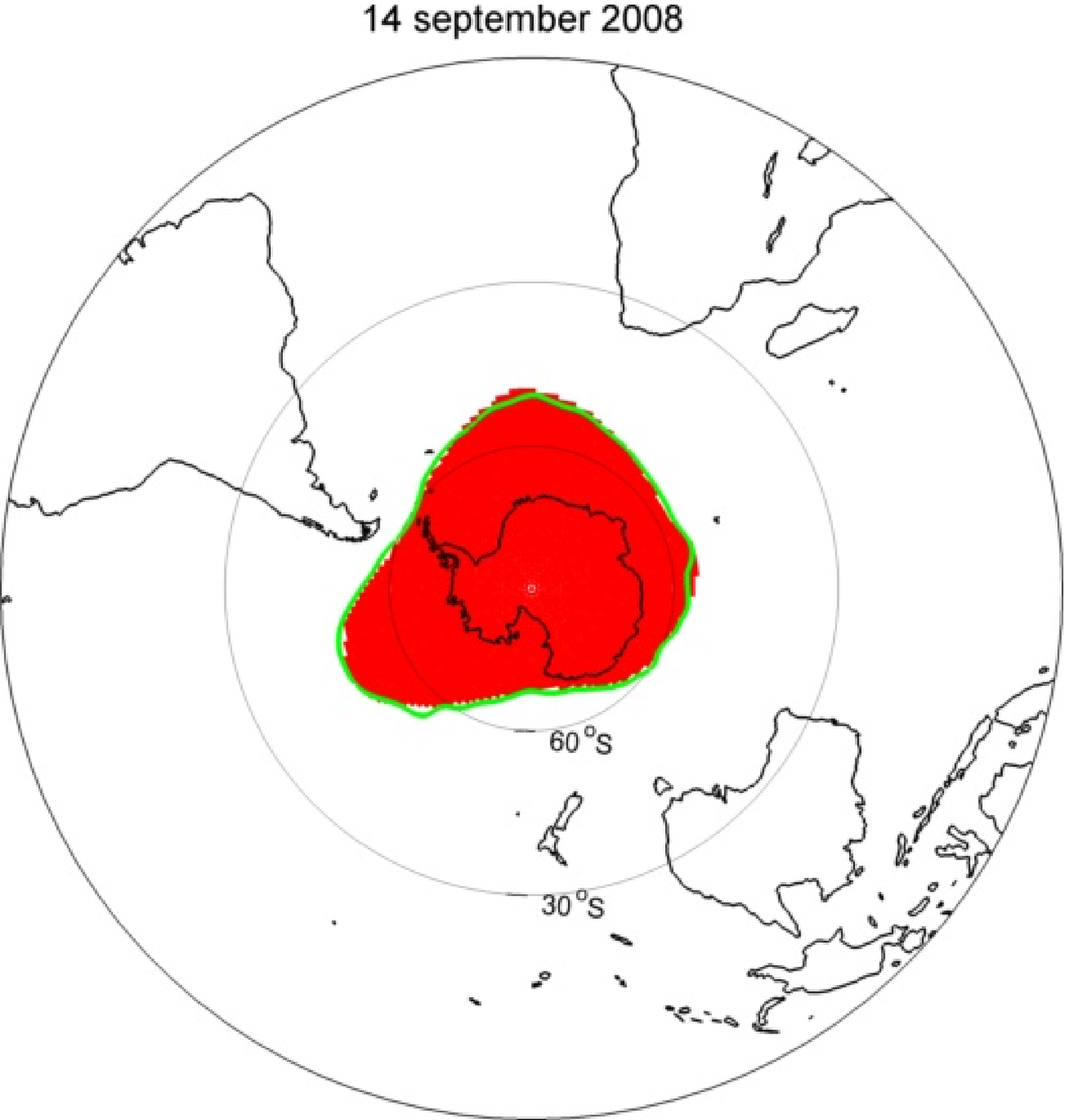}}\qquad\qquad
%\subfigure{\includegraphics[width=0.75\textwidth]{./PV_day1and14_Isentropic450KSep2008}}
     \caption{\label{fig:2D}\footnotesize{[Upper left]: Graph of $x$ (September 1, 2008). [Upper right]: Graph of $y$ (September 14, 2008). [Lower left]: The red set represents the coherent set $A_{t}$ (September 1, 2008) obtained from Algorithm \ref{alg1}. The green curve illustrates the vortex edge as estimated using PV. [Lower right]: As for [Lower left] at September 14, 2008.}}
 %[Lower left]: PV of the 475K isentropic surface and its corresponding PV gradient w.r.t.\ equivalent latitude. The green contour corresponds to the maximum PV gradient at 1 September 2008. [Lower right]: As for [Lower left] at September 14, 2008.}}
\end{figure}

To benchmark our new methodology, we will compare our result with a method commonly used in the atmospheric sciences to delimit the ``edge'' of the vortex.
It has been recognized that during the winter a strong gradient of
potential vorticity (PV) in the polar stratosphere is developed due to (1)
strong mixing in the mid-latitudes (resulting from the breaking of Rossby
waves emerging from the troposphere and breaking in the stratospheric
"surf zone" ~\cite{MP83}) and (2) weak mixing in the vortex region.
While potential vorticity depends only on the instantaneous vector field, potential vorticity is materially conserved for adiabatic, inviscid flow (both of which are good approximations in stratospheric flow over timescales of a week or two).
Thus, PV may be viewed as a quantity derived from the Lagrangian specification of the flow and  is therefore a meaningful comparator for these nonautonomous experiments who also use Lagrangian information.
%\todo{Naratip, please insert some sentence about why PV is related to transport barriers.}
%This vortex edge can be considered as a potential transport boundary, which is heuristically defined based on the highest gradient of the PV~\cite{}.
We used the method of Sobel et al.~\cite{SPW97} to calculate a PV-based estimate of the vortex edge.
The result is shown by the green curve in Figure \ref{fig:2D} (Lower left and Lower right).
Notating the area enclosed by the green curve at September 1, 2008 by $A^{PV}_t$ and at September 14, 2008 by $A^{PV}_{t+\tau}$, we compute $\rho_\mu(A^{PV}_t,A^{PV}_{t+\tau})\approx 0.984$;  98.4\% of the mass in $A^{PV}_{t}$ flows into $A^{PV}_{t+\tau}$ over the 13 day period.

Our transfer operator methodology is clearly consistent with the accepted potential vorticity approach and in fact identifies a region that experiences slightly \emph{greater} transport barriers across its boundary, indicated by the slightly larger coherence ratio:  99.1\% versus 98.4\%.
In the next section we apply our methodology in three dimensions to estimate the three-dimensional structure of the vortex.

\subsection{Three dimensions}
Strong transport barriers to midlatitude mixing in the southern hemisphere are also known to exist even in the full 3D case, where strong descent occurs near the edges of polar vortex at each pressure
altitude~\cite{Plumb02,Schoeberl_etal92}.
In principle, PV-based methods could be extended to three-dimensions by (i) slicing the three-dimensional region of interest into several nearby isentropic surfaces, (ii) applying the PV methodology on each individual isentropic surface to obtain an estimate of the vortex boundary on that surface, and (iii) stitching together these curves to form a reasonable two-dimensional surface, with the hope that the surface represents an estimate of the boundary of the three-dimensional vortex.
This stitching together of several curves is a nontrivial computational task and complicated geometries may be missed by this relatively simple construction.
The PV approach is likely to be more susceptible to noise than our direct approach because the computation of PV relies on estimates of derivatives of the velocity field (vorticity is the curl of the velocity field).
Finally, such an approach would not utilise the full three-dimensional vector field, but rather a series of vector fields on isentropic surfaces.

A key point of our new methodology is that it can easily applied in either two or three dimensions and works directly with the velocity fields to compute coherent regions with minimal external flux.

We set $X=S^{1}\times[-90\degree, -30\degree]\times[50,70]$, where the third (vertical) component of this direct product is in units of hPa. The ECMWF data is again provided on a $240\times 121$ grid in the longitude/latitude directions, and additionally at 7 pressure levels between 20 and 150 hPa.  We use the full 3D velocity field from the ECMWF reanalysis data.

We subdivide $X$ into a grid of $m=4116\times8=32928$ (longitude-latitude$\times$pressure) boxes, where all boxes have the same area in the longitude-latitude directions and a ``height'' of $(70-50)/8=20/8$ hPa in the pressure direction.
%Given that the 50-70 hPa pressure level is quite ``thin", we may assume the (physical) height of boxes to be approximately equal.
%\todo{Naratip, what does the previous sentence mean?}
Following hydrostatic equilibrium considerations, we set the mass $p_i$ of box $B_i$ to be proportional to the base area of $B_i$ multiplied by the box ``height'' in hPa, and normalise so that $\sum_{i=1}^{32928}p_i=1$.
 %\todo{Don't we have to normalise the $p_i$?}
 We select $Q=250$ sample points in each grid box, uniformly distributed in the longitude-latitude direction and equally spaced in pressure direction.
 %\todo{Naratip, complete previous sentence.}
 The $Q\times m$ images of these sample points are then covered by a grid of $n=51722$ boxes.

Repeating the approach of the two-dimensional study, the two largest singular values are computed to be $\sigma_1\approx1.0$ and $\sigma_2\approx0.9994$. A slice along the uppermost pressure level (50 hPa) of the optimal vectors $x$ and $y$ is shown in Figure~\ref{fig:3Dslice}.

%\todo{Naratip, how many grid boxes, points per grid box, and how about the densities and calc of $p_i$?  You also say that you have pressure altitudes of 20,30,50,70,100,120 and 150 hPa, so why [30,150]?}.
%points by 7 points in the longitude, in latitude direction and in the pressure altitude, respectively),obtained from the ECMWF Interim Reanalysis data set (http://data.ecmwf.int/data/index.html).
%We make use of 13 days of 6-hourly velocity fields from September 1 to September 14
%in 2008 on the pressure altitudes of 20,30,50,70,100,120 and 150 hPa.
%We will work on the phase space $X= S^{1}\times[-90\degree, -30\degree]\times[30,50]$, where $S^{1}$ is a
%circle parameterized from $0\degree$ to $360\degree$.
%The flow map $\Phi(x,t;\tau)$ is obtained as a solution of the 3D non-autonomous ODE as done in the previous example.
%We computed the SVD of $\Pi_p^{1/2}P^{(t)}(\tau)\Pi_q^{-1/2}$ at $t=1$ (1 September 2008)
%with $\tau=13$ days to obtain the optimal singular vectors $\hat{x}_2$ and $\hat{y}_2$.
%Figure~\ref{fig:3DPVSlice} shows a slice of the vector $x$ and $y$ along the pressure level of 50 hPa.
% and compare them with the potential vorticity at 50 hPa. These vectors clearly highlight the coherent pair of sets, which again agree with the core of potential vorticity.
%\todo{Naratip, can you please remove the PV part of Fig 3?  I don't think we need to take up space with pictures of PV.}

\begin{figure}[hbt]
\centering
\subfigure[]{
\includegraphics[scale=.3]{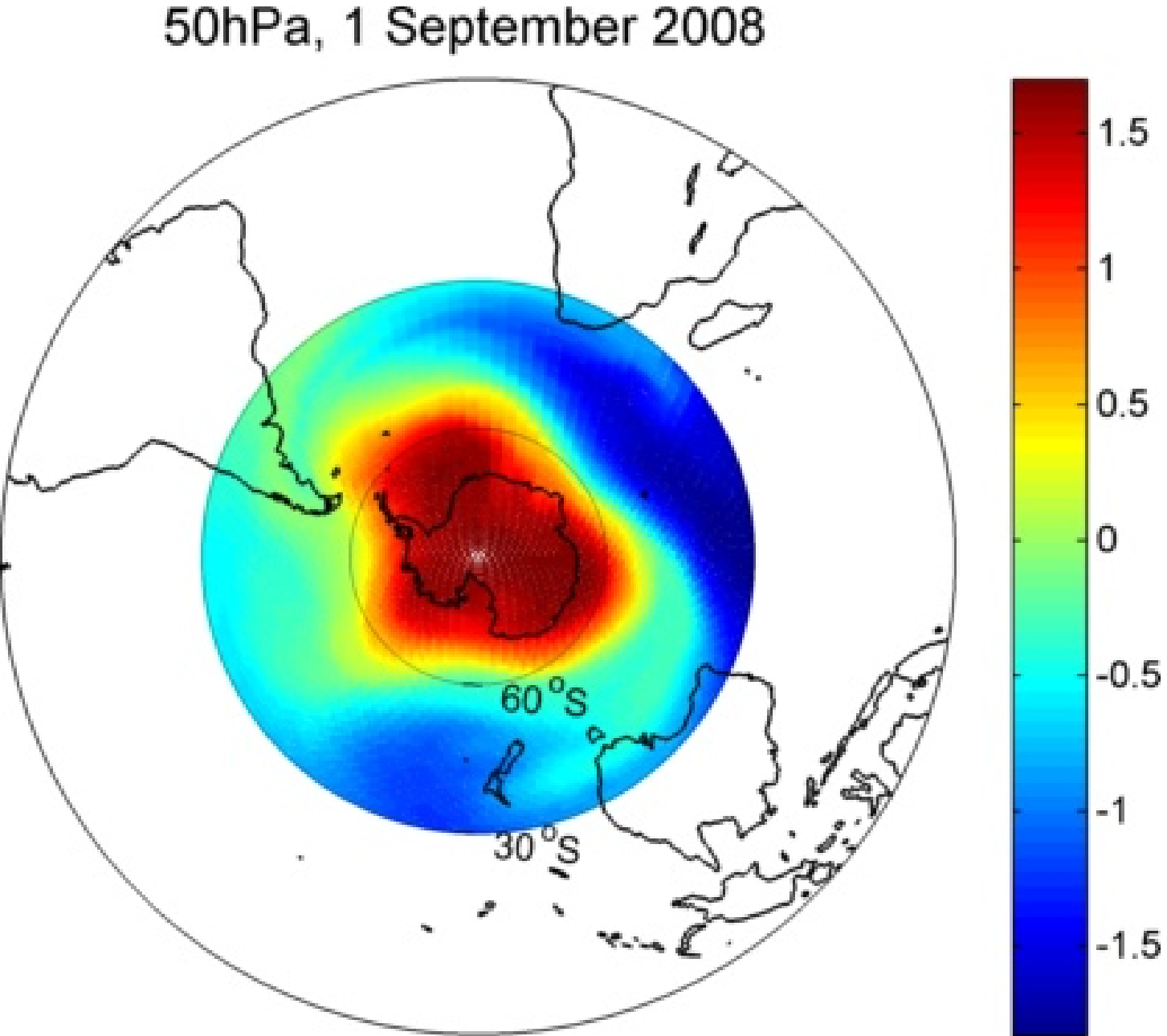}
}
\subfigure[]{
\includegraphics[scale=.3]{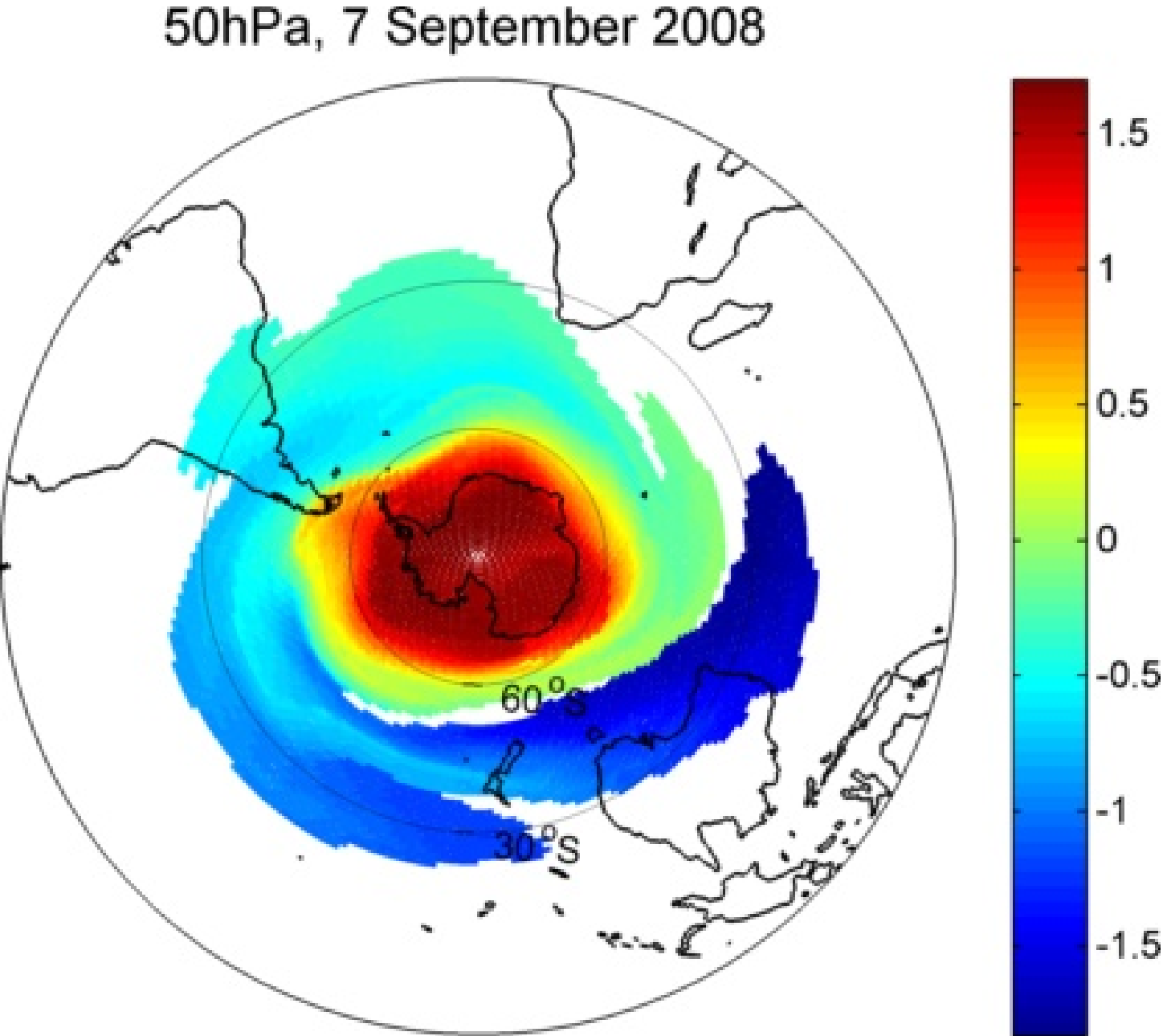}
}
\caption{\label{fig:3Dslice}\footnotesize{A slice of 3D vectors $x$ and $y$ along the 50hPa pressure level.}}
\end{figure}
Applying Algorithm \ref{alg1}, we compute the coherent sets $A_t$ and $A_{t+\tau}$ shown in Figure~\ref{fig:3dcoherents} with $\rho_{\mu}(A_{1},A_{14})\approx 0.9890$. Figures~\ref{fig:subfigz1} and \ref{fig:subfigz2} show that at 1 September 2008, a compact central domain with nearly vertical sides is extracted by Algorithm \ref{alg1}.  Figure \ref{fig:subfigz6} shows that after 6 days of flow, this set is advected both upwards and downwards, and that this advection is not uniform over all latitudes.  Figure \ref{fig:subfigz6} and Figure \ref{fig:subfigz8} (which gives a view from ``below''), demonstrate that the upward flow occurs primarily near the centre of the vortex (high latitudes), while the downward flow is concentrated around the periphery (lower latitudes). A bowl-like shape is evident in Figure \ref{fig:subfigz8} showing a thin layer at the core of the coherent set at 7 September, descending toward the troposphere near the edge of coherent set. This observation agrees with the motion of ozone masses in the lower stratosphere, where the mass in the mixing zone around the mid-latitude slowly moves downward and the mass in the vortex core moves within a thin stratospheric layer~\cite{Plumb02,Schoeberl_etal92}.
%\todo{Revise this text and Fig 5 caption when settled on figures.}
%\todo{Naratip, please remove (c), (g)}

%\todo{What can we compare this with?  You say that PV cannot be naturally extended.  Is there a non-natural extension?  How about FTLEs in 3D?.}

%The pair of optimal coherent sets($A_{1}$ and $A_{14}$)is then determined by thresholding the coherent vectors $\hat{x}_2$ and $\hat{y}_2$ to maximize the coherent ratio, see Figure~\ref{fig:3Dcoherents}. The optimal coherent ratio is computed to be $\rho_{\mu}(A_{1},A_{14})\approx$ 0.77 and the advection of initial sample points in $A_1$ with the flow duration $\tau=13$ is demonstrated in Figure~\ref{fig:3Dcoherency}.

\begin{figure}[hbt]
\centering
\subfigure[]{
\includegraphics[scale=.225]{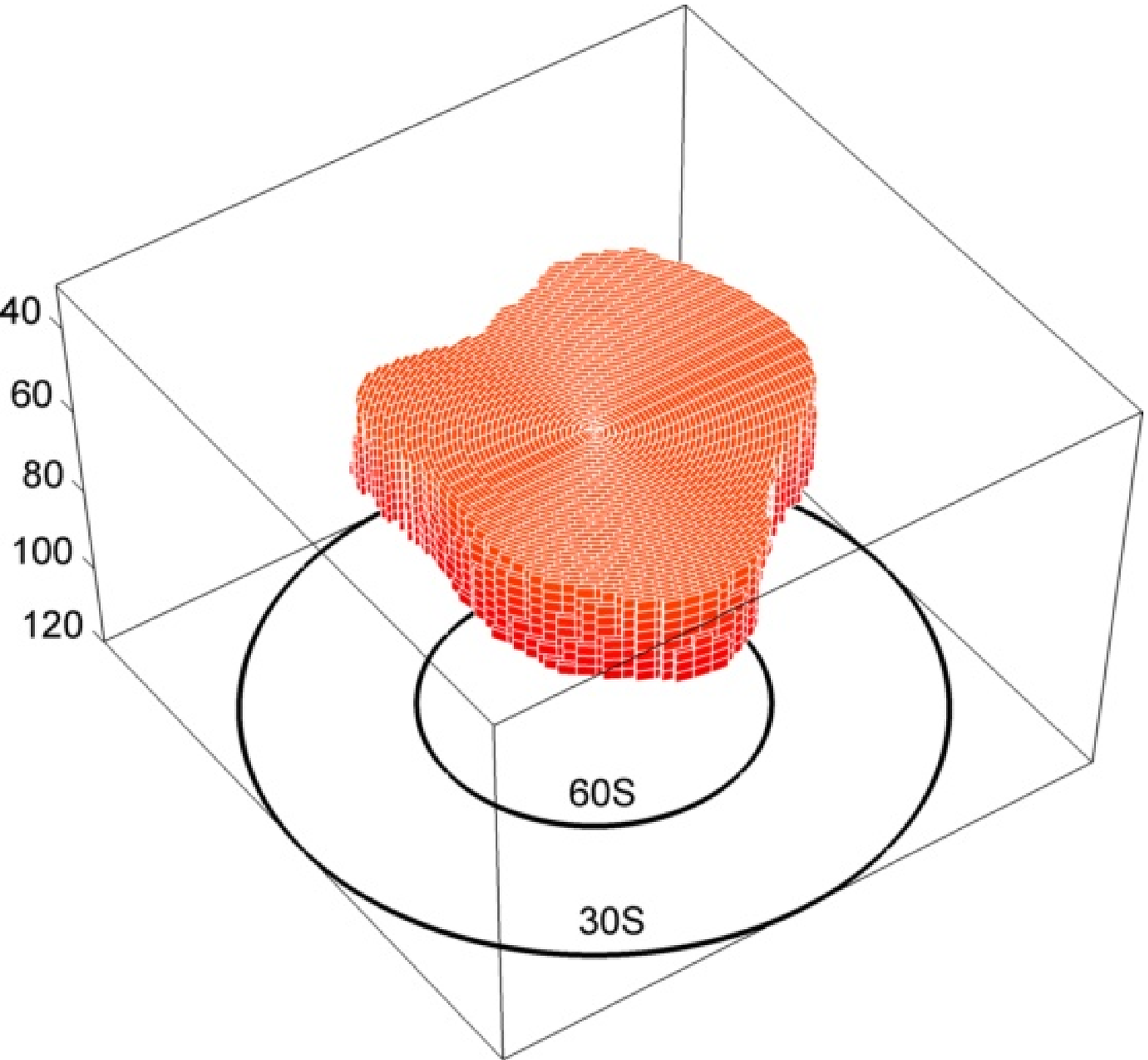}
\label{fig:subfigz1}
}
\subfigure[]{
\includegraphics[scale=.225]{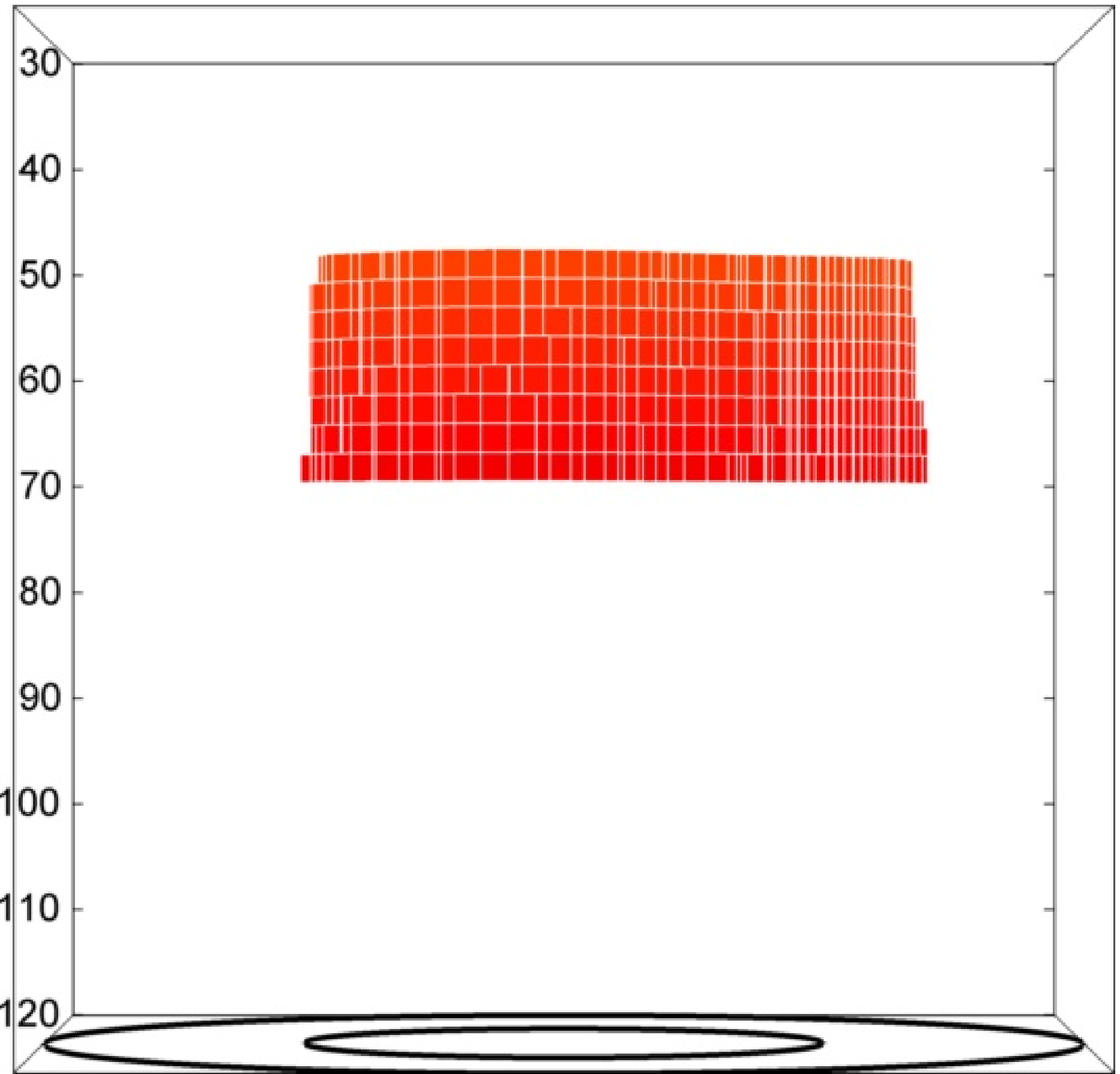}
\label{fig:subfigz2}
}
\subfigure[]{
\includegraphics[scale=.255]{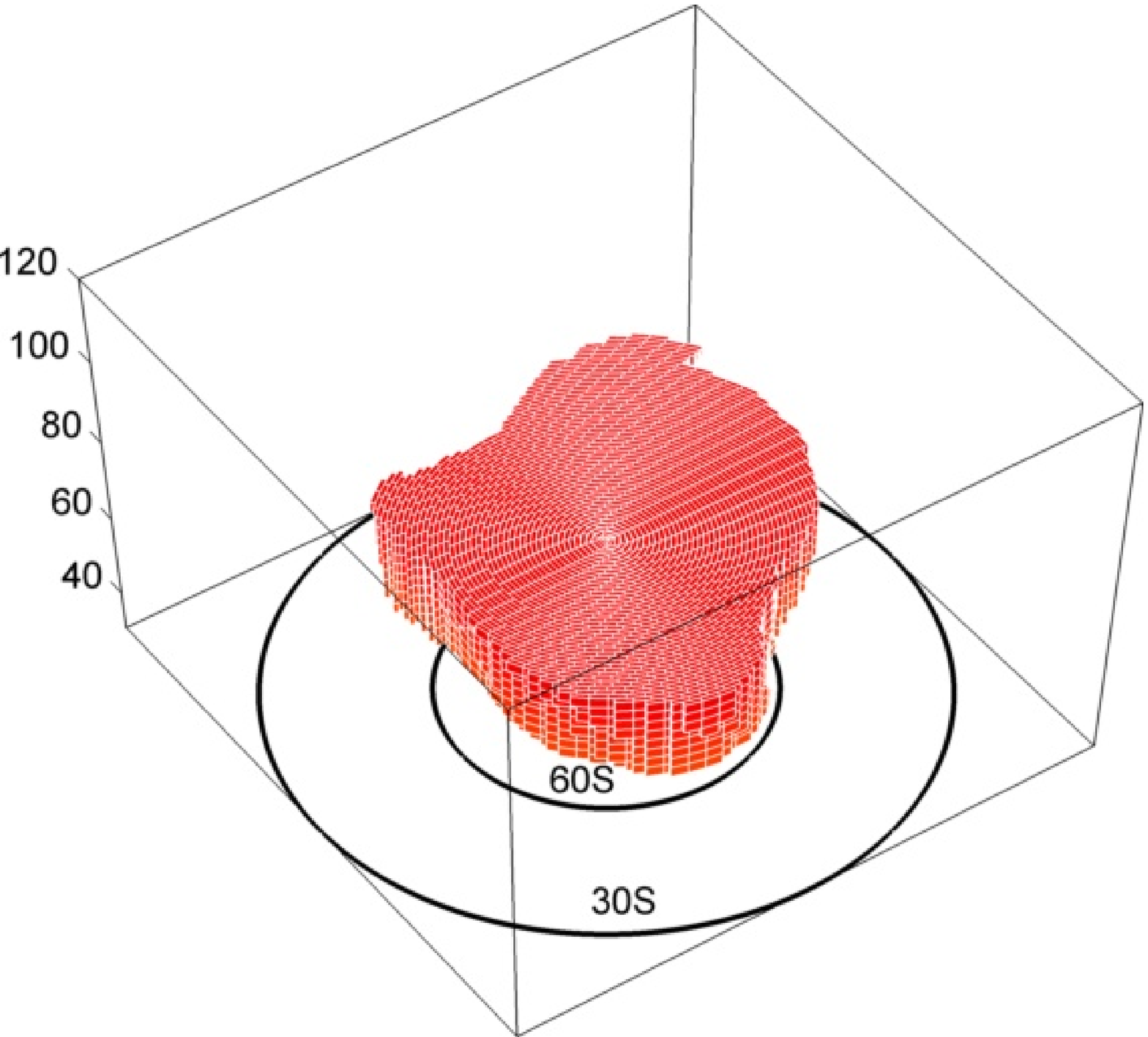}
\label{fig:subfigz4}
}
\subfigure[]{
\includegraphics[scale=.255]{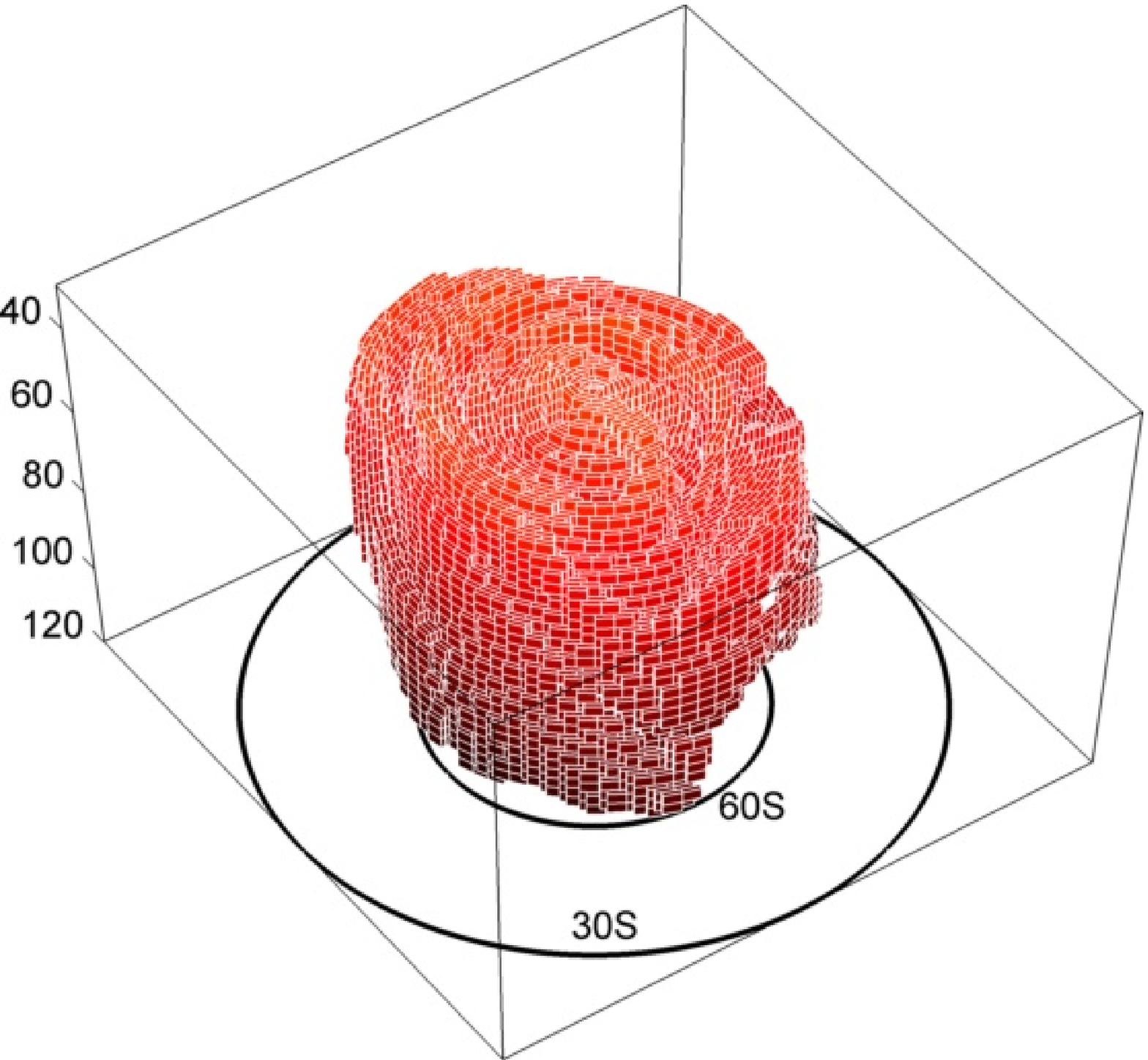}
\label{fig:subfigz5}
}
\subfigure[]{
\includegraphics[scale=.225]{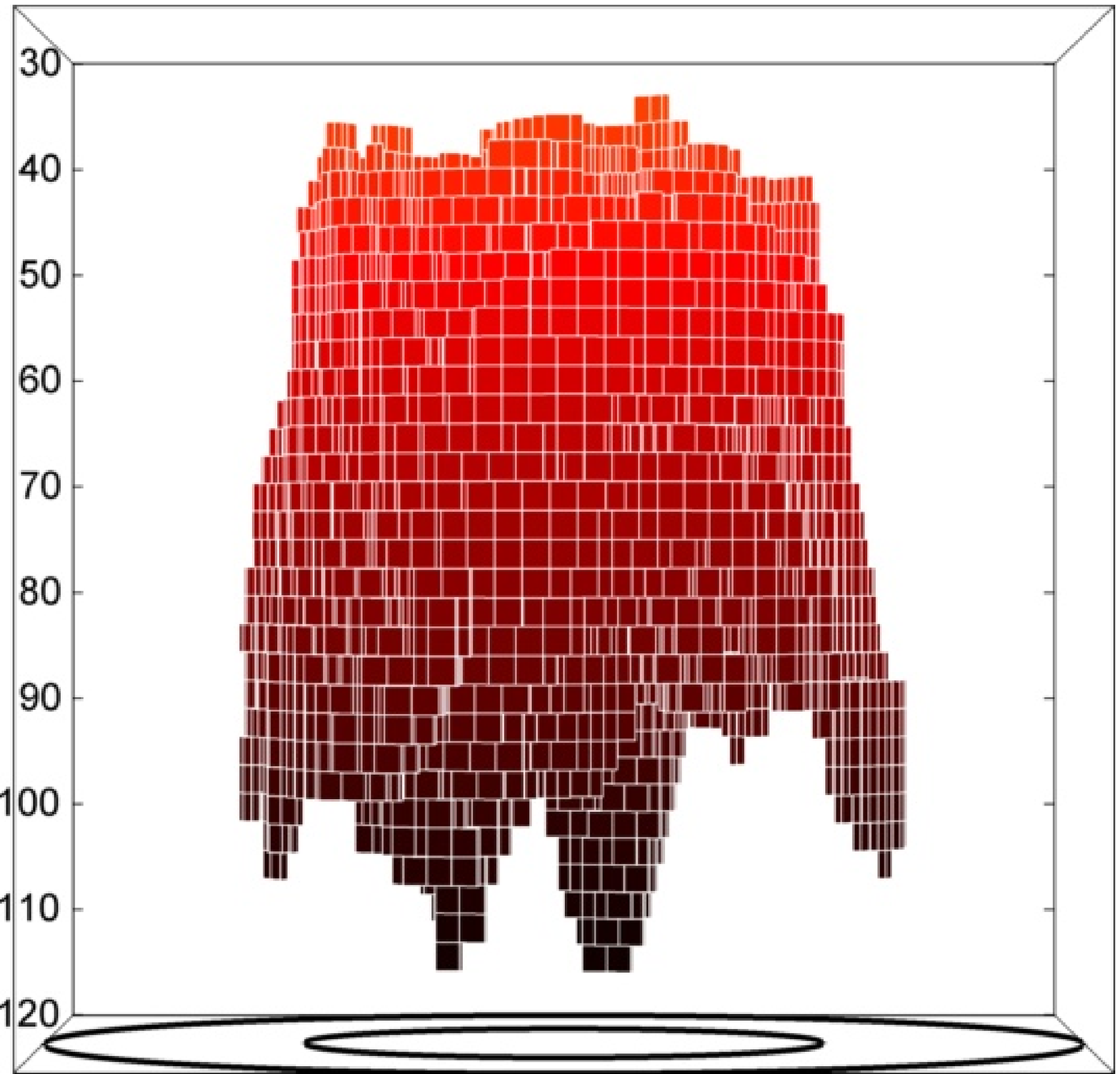}
\label{fig:subfigz6}
}
\subfigure[]{
\includegraphics[scale=.255]{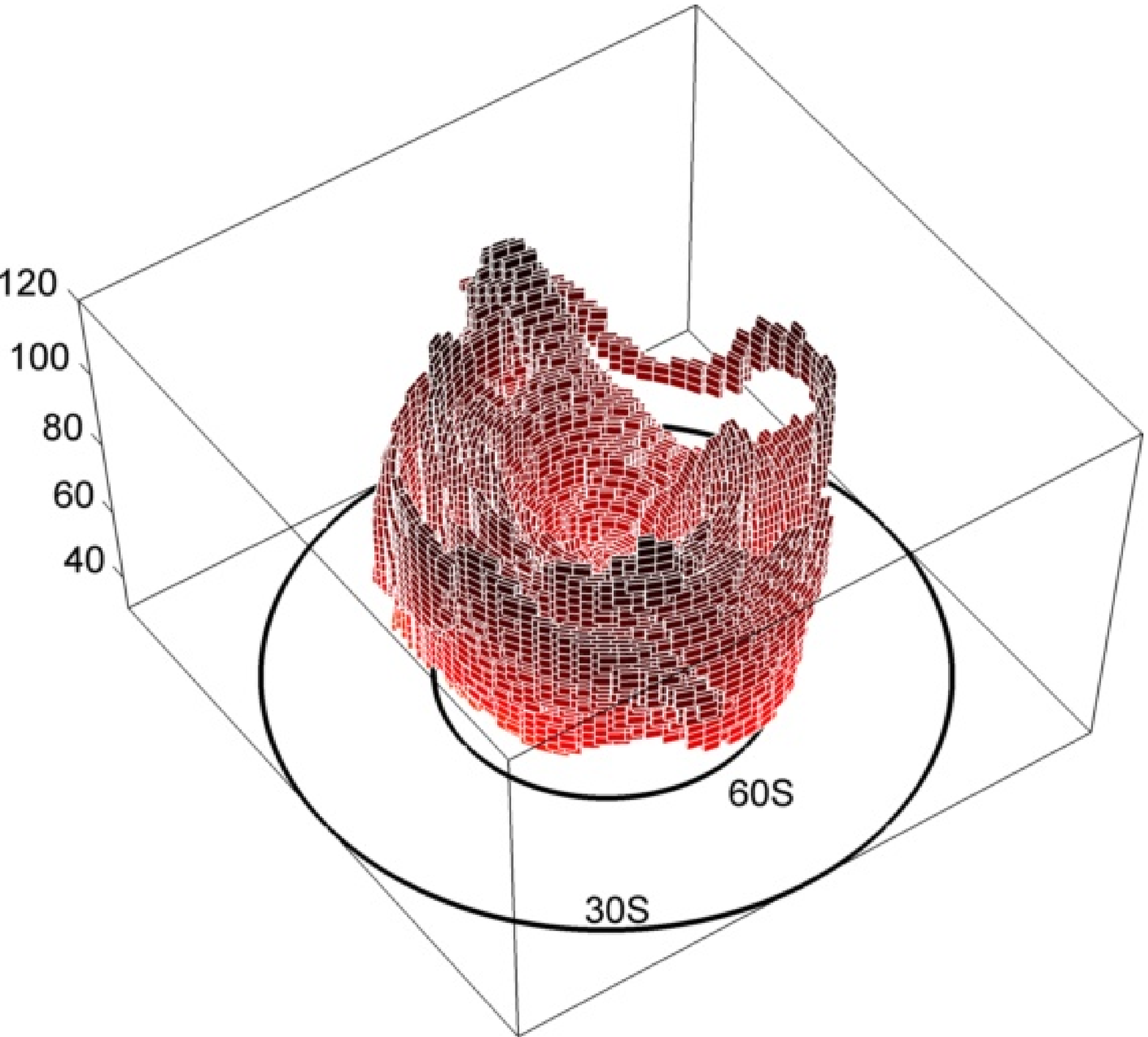}
\label{fig:subfigz8}
}\caption[]{\label{fig:3dcoherents}(a)-(c) show the optimal coherent sets at 1 September 2008 at different views. (d)-(e) show the optimal coherent sets at 7 September 2008 at different views.}
\end{figure}

%\begin{figure}[hbt]
%\centering
%\includegraphics[scale=0.75]{Coherentsets_sept2008new}
%\caption{\label{fig:3Dcoherents}\footnotesize{The pair of coherent sets $(A_t, A_{t+\tau})$;  Top view for (a) and (b) and an alternate view for (c) and (d).} Note that in (a) and (b) the circles are plotted at the latitudes 0\degree, -30\degree and -60\degree in the poleward direction. In (c) and (d) the circle represent the -60\degree latitude.}
%\end{figure}
%\todo{In Figure 4, it seems that only the very upper part of the 30-150hPa domain is selected at the initial time?  Is this true?  We should comment on this if it is the case, for clarification, but also to attempt to provide a physical reasoning for why this should be the case.}
%\todo{In Fig 4, there are no lat/lon coords, can you add please?}

%\todo{I think we might have to remove fig 5 unless we can make things more clear.}
%\begin{figure}[hbt]
%\centering
%\includegraphics[scale=0.5]{sv2withpoints}
%\caption{\label{fig:3Dcoherency}\footnotesize{ Initial sample points in $A_1$ are advected by the flow to $t=14$ and then plotted here to demonstrate that most advected points are indeed inside $A_{14}$.}}
%\end{figure}

\section{Conclusions}
We introduced a methodology for identifying minimally dispersive regions (coherent sets) in time-dependent flows over a finite period of time. Our approach directly used the time-dependent velocity fields to construct an ensemble description of the finite-time dynamics;  the Perron-Frobenius (or transfer operator).
The transport of mass is explicitly calculated in terms of a reference measure considered to be most appropriate for the application by the practitioner.
Singular vector computations of matrix approximations of the Perron-Frobenius operator directly yielded images of the coherent sets;  the left singular vector described the coherent region at the initial time and the right singular vector at the final time.
Our methodology is the first systematic transfer operator approach for handling time-dependent systems over finite time durations.
A particular feature of our approach is that one can focus on small subdomains of interest, rather than study the entire domain;  this leads to major computational savings.

In our first case study we used this new technique to show that an idealized stratospheric flow operates as two almost independent dynamical systems with a small amount of interaction across two Rossby wave regimes.
Our second case study utilised reanalysed velocity data sourced from the European Centre for Medium Range Weather Forecasting (ECMWF) to estimate the location of the Southern polar vortex.
Studying the dynamics on a two-dimensional isentropic surface, we found excellent agreement with traditional potential vorticity (PV) based approaches, and improved slightly over the PV methodology in terms of the coherence of the vortex.
We also used the full three-dimensional velocity field to determine the vortex location in three dimensions, a computation not easily carried out with standard applications of the PV approach.

\appendix
\section{Proof of Lemma \ref{complemma}}
%\todo{Gary to tidy up and shorten the proof.}
We first show that the condition on $y$ in (\ref{relaxed}) is unnecessary.
\begin{equation}
\label{eqn1}\max_{x\in\mathbb{R}^m \atop y\in\mathbb{R}^n}\left\{\frac{\langle xL,y\rangle_q}{\|x\|_p\|y\|_q}: \langle x,\mathbf{1}\rangle_p=0\right\}=\max_{x\in\mathbb{R}^m \atop y\in\mathbb{R}^n}\left\{\frac{\langle xL,\frac{y}{\|y\|_q}\rangle_q}{\|x\|_p}: \langle x,\mathbf{1}\rangle_p=0\right\}=\max_{x\in\mathbb{R}^m}\left\{\frac{\|xL\|_q}{\|x\|_p}: \langle x,\mathbf{1}\rangle_p=0\right\},
\end{equation}
with the maximizing $y$ being $y=xL$.
Setting $y=xL$, we see that $\langle y,\mathbf{1}\rangle_q=\langle xL,\mathbf{1}\rangle_q=\langle x,\mathbf{1}L^*\rangle_p=\langle x,\mathbf{1}\rangle_p=0$ (it is straightforward to check $\mathbf{1}L^*=\textbf{1}$;  $L^*=P^\top$).
Thus, since the maximizing $y$ in (\ref{eqn1})  satisfies $\langle y,\mathbf{1}\rangle_q=0$ when $\langle x,\mathbf{1}\rangle_p=0$, we see that the value of (\ref{relaxed}) equals the value of the LHS of (\ref{eqn1}), and both (\ref{relaxed}) and (\ref{eqn1}) have the same maximizing $x$ and $y$.

We now convert the RHS of (\ref{eqn1}) to a maximization in the standard $\ell_2$ norm by noting that $\langle x_1,x_2\rangle_p=\langle x_1\Pi_p^{1/2},x_2\Pi_p^{1/2}\rangle_2$ and $\langle y_1,y_2\rangle_q=\langle y_1\Pi_q^{1/2},y_2\Pi_q^{1/2}\rangle_2$.
\begin{equation}
\mbox{RHS of (\ref{eqn1})}=\max_{x\in\mathbb{R}^m}\left\{\frac{\|xL\Pi_q^{1/2}\|_2}{\|x\Pi_p^{1/2}\|_2}: \langle x\Pi^{1/2}_p,\mathbf{1}\Pi^{1/2}_p\rangle_2=0\right\}=\max_{\hat{x}\in\mathbb{R}^m}\left\{\frac{\|\hat{x}\Pi^{-1/2}_pL\Pi_q^{1/2}\|_2}{\|\hat{x}\|_2}: \langle \hat{x},p^{1/2}\rangle_2=0\right\},
\end{equation}
where we have made the substitution $\hat{x}=x\Pi_p^{1/2}$. We claim that the leading singular value of $\Pi^{-1/2}_pL\Pi_q^{1/2}=\Pi^{1/2}_pP\Pi_q^{-1/2}$ is 1, with corresponding left singular vector $p^{1/2}$.

To prove this claim, we show that 1 is the leading singular value of $L$ with corresponding left singular vector $\mathbf{1}$ (where $L$ is always considered as a linear mapping from $\langle \cdot,\cdot\rangle_p$ to $\langle \cdot,\cdot\rangle_q$).
Since $\mathbf{1}L=\mathbf{1}$ and $\mathbf{1}L^*=\mathbf{1}$, one has $\mathbf{1}LL^*=\mathbf{1}$;  also $LL^*$ is irreducible iff $PP^\top$ is irreducible.  By the Perron-Frobenius Theorem (eg.\ Thm 1.4 and 2.1 \cite{berman_plemmons}), 1 is the largest real eigenvalue of $LL^*$, and is simple; hence the largest singular value of $L$ is 1 and the left and right singular vectors are $\mathbf{1}\in\mathbb{R}^m$ and $\mathbf{1}\in\mathbb{R}^n$ respectively.

The result now follows from the Courant-Fischer theorem for symmetric matrices (see eg. Thm.\ 4.2.11 \cite{horn_johnson}), standard properties of singular vectors and the computation $y=xL=\hat{x}\Pi_p^{1/2}L=(\hat{x}\Pi_p^{-1/2}L\Pi_q^{1/2})\Pi_q^{-1/2}=\hat{y}\Pi_q^{-1/2}$ where $\hat{y}$ is the right singular vector of $\Pi_p^{-1/2}L\Pi_q^{1/2}$ corresponding to $\sigma_2$.

\bibliographystyle{unsrt}
\bibliography{Bibchaos2010}

\end{document}